\numberwithin{equation}{section}
\definecolor{darkblue}{RGB}{0,0,170}
\definecolor{brickred}{RGB}{200,0,0}
\newcommand{\s}{\sigma}
\newcommand{\R}{\mathbb{R}}
\newcommand{\Z}{\mathbb{Z}}
\newcommand{\N}{\mathbb{N}}
\newcommand{\F}{\mathcal{F}}
\newcommand{\eps}{\varepsilon}
\newcommand{\dist}{\mathrm{dist}}
\newcommand{\supp}{\mathrm{supp}}
\newcommand{\diam}{\text{\normalfont diam}}
\newcommand{\pv}{\mathrm{p.\!v.}\!}
\newcommand{\Ds}{{\left(-\lapl\right)}^\s}
\newcommand{\lapl}{\triangle}
\newcommand{\grad}{\nabla}
\newcommand{\loc}{{\rm loc}}
\newcommand{\X}{{\mathcal{X}}}
\newcommand{\opnorm}[1]{{\left\vert\kern-0.25ex\left\vert\kern-0.25ex\left\vert #1 
    \right\vert\kern-0.25ex\right\vert\kern-0.25ex\right\vert}}
\DeclareMathOperator{\PV}{\pv}
\newtheorem{theorem}{Theorem}[section]
\newtheorem{lemma}[theorem]{Lemma}
\newtheorem{proposition}[theorem]{Proposition}
\newtheorem{corollary}[theorem]{Corollary}
\theoremstyle{definition}
\theoremstyle{remark}
\date{\today}
\author{Nicola Abatangelo}
\address{(N. Abatangelo) Institut für Mathematik, Goethe-Universität Frankfurt am Main, Robert-Mayer-Str. 10, 60325 Frankfurt am Main, Germany.}
\email{abatangelo@math.uni-frankfurt.de}
\author{Matteo Cozzi}
\address{(M. Cozzi) Dipartimento di Matematica ``Federigo Enriques'', Universit\`a degli Studi di Milano, Via Saldini 50, 20133 Milan, Italy \& Department of Mathematical Sciences, University of Bath, Claverton Down, Bath BA2 7AY, United Kingdom.}
\email{matteo.cozzi@unimi.it}
\title{An elliptic boundary value problem with fractional nonlinearity}
\thanks{{\it MSC2020: Primary:} 35J67, 35J60; {\it Secondary:} 35B51, 35R11, 35J25.} 
\thanks{{\it Keywords}: Dirichlet problem, operators of mixed order, comparison principles, fixed-point arguments, large solutions.}
\thanks{{\it Acknowledgements}: The first author is supported by the Alexander von Humboldt Foundation. The second author has been supported by a Royal Society Newton International Fellowship.}
\begin{document}

\begin{abstract}
We investigate existence and uniqueness of solutions to second-order elliptic boundary value problems containing a power nonlinearity applied to a fractional Laplacian. 
We detect the critical power separating the existence from the non-existence regimes.
For the existence results, we make use of a particular class of 
weighted Sobolev spaces to compensate boundary singularities which are naturally built in the problem.
\end{abstract}

\maketitle

\section{Introduction}

Given a bounded domain~$\Omega\subseteq \R^N$ with~$\partial\Omega\in C^2$,~$\s\in(0,1)$, 
and~$p\in[1,\infty)$, we study problems of the form
\begin{equation}\label{prob}
\left\lbrace\begin{aligned}
-\lapl u+\big|\Ds u\big|^{p-1}\Ds u &= f & & \hbox{in }\Omega \\
u &= g & & \hbox{on }\partial\Omega \\
u &= h & & \hbox{in }\R^N\setminus\overline\Omega.
\end{aligned}\right.
\end{equation}
Here~$\lapl$ denotes the (classical) Laplace operator,
whereas~$\Ds$ is the fractional Laplacian
\begin{align}\label{def:Ds}
\Ds u(x):=c_{N,\s}\PV\int_{\R^N}\frac{u(x)-u(y)}{{|x-y|}^{N+2\s}}\,dy=
c_{N,\s}\lim_{\eps\downarrow 0}\int_{\R^N\setminus B_\eps(x)}\frac{u(x)-u(y)}{{|x-y|}^{N+2\s}}\,dy,
\end{align} 
a \textit{nonlocal} positive operator of \textit{fractional} order~$2\s\in(0,2)$.
The positive constant~$c_{N,\s}$ is a normalization in order to have
that the Fourier symbol of the operator is~$|\xi|^{2\s}$, \textit{i.e.},
\begin{align*}
\F\big[\Ds u\big](\xi)=|\xi|^{2\s}\F u(\xi),
\qquad u\in C^\infty_c(\R^N).
\end{align*}
We refer to~\cites{av,hitchhiker} for an introduction to this operator. 
Let us here simply remark that definition~\eqref{def:Ds} only makes pointwise sense 
for functions which are defined in the whole Euclidean space~$\R^N$.
For this reason, the prototypical well-posed boundary value problem driven by the fractional Laplacian
takes the form
\begin{align}\label{fractional Dirichlet}
\left\lbrace\begin{aligned}
\Ds u &= f & & \hbox{in }\Omega, \\
u &= h & & \hbox{in }\R^N\setminus\overline\Omega.
\end{aligned}\right.
\end{align}
In this setting, prescribing the values of the solution~$u$ on~$\partial\Omega$ is immaterial,
as the integral operator~$\Ds$ does not see negligible sets. Nevertheless,
it is reasonable to investigate the boundary regularity of solutions and, in particular,
their continuity across~$\partial\Omega$, like for example in~\cite{barles-chasseigne-imbert}
or more recently in~\cite{audrito-rosoton}---in both cases in a more general setting than the one in~\eqref{fractional Dirichlet}.

Problem~\eqref{prob} is motivated by the understanding of the interaction and the overlapping
of the different boundary conditions required by the Laplacian and the fractional Laplacian.
Indeed, although the term~$|\Ds u|^{p-1}\Ds u$ can be interpreted as a mere nonlinear perturbation
of the leading one~$\lapl u$, one needs to know the values of~$u$ also outside~$\Omega$
in order to make it meaningful. Mind also that different definitions of~$u$ outside~$\Omega$
might drastically affect the operator on the interior, a peculiarity of its nonlocality.

\subsection{Main results}

The exponent~$p$ has a prominent role in the solvability of~\eqref{prob}.
The range of~$p$ splits at~$p=1/\s$ into two regimes: below this value, which we call \textit{subcritical regime}, 
it is possible to solve~\eqref{prob}
if the other prescribed data~$f$ and~$g$ are somewhat well-behaved;
above that value, in the \textit{supercritical regime}, a switch in the lead of the equation takes place
and the nonlocal term becomes the dominant one, 
making it impossible (in general) to attain the desired values at the boundary~$\partial\Omega$.
See Theorems~\ref{thm:ex} and~\ref{thm:non-ex} for the precise statements.

In all the following,~$\Omega$ is supposed to be a bounded domain with $C^2$ boundary.

\begin{theorem}[Existence in the subcritical regime]\label{thm:ex}
Let~$g \in C^0(\partial\Omega)$,~$h \in L^\infty(\R^N\setminus\overline{\Omega})$, and~$f$ be a locally H\"older continuous function in~$\Omega$ satisfying
\begin{align}\label{hypo-data}
{\dist(\cdot,\partial\Omega)}^{2-\alpha}f\in L^\infty(\Omega),
\end{align}
for some~$\alpha > 0$. If
\begin{equation}\label{pqle1}
1\le p < \frac1\s,
\end{equation}
then problem \eqref{prob} has a unique solution~$u\in L^\infty(\R^N) \cap C^2(\Omega) \cap C^0(\overline\Omega)$.
\end{theorem}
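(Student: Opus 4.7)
The plan is to reduce \eqref{prob} to a problem with zero boundary trace and zero exterior data, then solve the reduced problem via a fixed-point argument in a weighted function space, and deduce uniqueness from a comparison principle for the mixed-order operator that appears upon linearization. First, I would take $u_0$ to be the function on $\R^N$ equal to $h$ in $\R^N\setminus\overline\Omega$ and to the classical harmonic extension of $g$ inside $\Omega$. By the Perron method $u_0\in C^\infty(\Omega)\cap C^0(\overline\Omega)\cap L^\infty(\R^N)$, and a direct computation from~\eqref{def:Ds}, exploiting the generic jump $g-h$ of $u_0$ across $\partial\Omega$, yields the pointwise bound $|\Ds u_0(x)|\le C\,\dist(x,\partial\Omega)^{-2\s}$ in $\Omega$. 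Writing $u=u_0+w$ turns~\eqref{prob} into
\begin{equation*}
\left\{\begin{aligned}
-\lapl w+\bigl|\Ds(u_0+w)\bigr|^{p-1}\Ds(u_0+w) &= f & &\text{in }\Omega, \\
w &= 0 & &\text{on }\partial\Omega, \\
w &= 0 & &\text{in }\R^N\setminus\overline\Omega.
\end{aligned}\right.
\end{equation*}

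Since $p<1/\s$ by~\eqref{pqle1} we have $2\s p<2$, so the right-hand side of the Poisson equation for $w$ blows up only at an integrable rate at $\partial\Omega$; I would therefore seek $w$ with the predicted behavior $w(x)=O(\dist(x,\partial\Omega)^{2-2\s p})$, together with matching bounds on $\nabla w$ and $D^2w$, in a weighted Hölder space $\X$ compatible with weighted Schauder estimates for the Laplacian. I would then define a fixed-point map $T\colon\X\to\X$ by $T(v)=w$, where $w$ solves
$$-\lapl w=f-\bigl|\Ds(u_0+v)\bigr|^{p-1}\Ds(u_0+v)\ \text{in }\Omega,\quad w=0\ \text{on }\partial\Omega.$$
The crucial analytic ingredient is a pointwise estimate of the form
$$\bigl|\Ds(u_0+v)(x)\bigr|\le C\bigl(1+\|v\|_{\X}\bigr)\dist(x,\partial\Omega)^{-2\s},$$
obtained by splitting the integral in~\eqref{def:Ds} into near-boundary and far contributions and using the weighted decay of $v$. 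Combined with~\eqref{hypo-data} and weighted Schauder estimates for $-\lapl$, this gives $T(v)\in\X$ with controlled norm and implies that $T$ is continuous and compact on a suitably large ball of $\X$. Schauder's fixed-point theorem then produces a solution $w$, and the local Hölder continuity of $f$ upgrades $u=u_0+w$ to $C^2(\Omega)\cap C^0(\overline\Omega)\cap L^\infty(\R^N)$ via standard interior elliptic regularity.

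For uniqueness, if $u_1,u_2$ are two solutions, then $W=u_1-u_2$ vanishes on $\partial\Omega$ and in $\R^N\setminus\overline\Omega$, and the monotonicity of $t\mapsto|t|^{p-1}t$ gives $-\lapl W+b(x)\,\Ds W=0$ in $\Omega$ for a non-negative coefficient $b$ obtained as a Newton-type integral of $p|\cdot|^{p-1}$. Since both $-\lapl$ and $\Ds$ obey the maximum principle, at a strictly positive interior maximum of $W$ both terms would be non-negative, so $W\equiv 0$ (and symmetrically at the minimum).

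The main obstacle I expect is the weighted bookkeeping in the fixed-point step: tracking the precise boundary blow-up of $\Ds(u_0+v)$ when $v$ has prescribed weighted decay, and showing that composition with the nonlinearity $|\cdot|^{p-1}\cdot$ — which is only $C^{0,\min(p,1)}$ regular — preserves the weighted Hölder norms required by Schauder. Choosing $\X$ so that it is simultaneously $T$-invariant, compactly embedded in a coarser space on which $T$ is continuous, and compatible with weighted Schauder estimates is the delicate design choice on which everything else hinges.
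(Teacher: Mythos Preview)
Your reduction to homogeneous data via the harmonic extension, the bound $|\Ds u_0|\le C\delta^{-2\s}$, and the uniqueness argument are all correct and match the paper (the latter is essentially Proposition~\ref{WCPprop}). The gap is in the existence step: Schauder's fixed-point theorem, as you invoke it, cannot close. From your estimate $|\Ds(u_0+v)|\le C(1+\|v\|_{\X})\,\delta^{-2\s}$ the right-hand side fed into $(-\lapl)^{-1}$ is of size $(1+\|v\|_{\X})^p\,\delta^{-2\s p}$, so weighted Schauder yields only $\|T(v)\|_{\X}\le C(1+\|v\|_{\X})^p$. Since $p\ge1$ this is never sublinear in $\|v\|_{\X}$, and no ball is $T$-invariant. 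You need an independent mechanism to break the superlinearity, and your outline does not provide one.

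The paper supplies two such mechanisms. It first regularizes by truncating the boundary singularity with cutoffs $\eta_j$ supported in $\{\delta>2^{-j}\}$, so that each approximate problem can be posed in an unweighted space $C^\beta(\overline\Omega)$ with $\beta\in(2\s,2)$. It then uses Leray--Schauder rather than Schauder: the required a~priori bound on solutions of $v=\lambda T_j[v]$ comes from the comparison principle (Corollary~\ref{WCPcor2}), which produces a uniform $L^\infty$ bound on $u=v+\psi$ \emph{before} any H\"older estimate is attempted. This $L^\infty$ bound is then fed into the interpolation $\|v\|_{C^\beta}\le C\|v\|_{L^\infty}^{1-\beta/\gamma}\|v\|_{C^\gamma}^{\beta/\gamma}$, turning the effective growth exponent into $\beta p/\gamma<1$ (choosing $\beta$ near $2\s$ and $\gamma$ near $2$, which works precisely because $\s p<1$). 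The passage $j\to\infty$ is then carried out not in weighted H\"older spaces but via uniform bounds in Lototsky's weighted Sobolev spaces $L^{2,q}_N(\Omega)$ (Section~\ref{sec:lototsky}, Proposition~\ref{lotest}, Lemma~\ref{DsleH2slem}). Your direct weighted-H\"older scheme might be salvageable by importing the $L^\infty$-from-comparison step and switching to Leray--Schauder, but as written the self-map condition is unverified, and the difficulty you flag in your last paragraph is exactly where the argument currently breaks.
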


We underline how data~$g$ and~$h$ are completely unrelated in the above statement:
in particular, we do not need to assume that the values of~$g$ on~$\partial\Omega$
match with those of a suitable extension of~$h$ to~$\partial\Omega$ 
(which, in fact, needs not to be continuous nor continuously extensible up
to the boundary).

In the particular case~$p=1$, the equation becomes linear and it admits a Green function
for which sharp two-sided estimates are available, see~Chen, Kim, Song, and Vondra\v cek~\cites{MR2928344, MR2912450}:
although we do not make use of these, let us just mention here that Theorem~\ref{thm:ex} (for~$p=1$) 
can also be deduced by means of such Green representation.
Very recently, Biagi, Dipierro, Valdinoci, and Vecchi~\cite{bdvv} have studied 
existence, maximum principles, and regularity for~\eqref{prob} with~$p=1$ and~$g,h=0$: 
their techniques and goals are quite different from our approach, 
but their main result \cite{bdvv}*{Theorem~1.7} is, nonetheless, closely related to Theorem~\ref{thm:ex}.

The core of the proof of Theorem~\ref{thm:ex} relies on a fixed-point argument
for the nonlinear operator
\begin{align}\label{nonlinear operator}
u \ \longmapsto\ -\lapl u + \big|\Ds u\big|^{p-1}\Ds u.
\end{align}
The (possible) jump discontinuity of~$u$, inherited by the prescription of~$g$ and~$h$ in \eqref{prob},
entails a singularity in the nonlocal part of~\eqref{nonlinear operator} at the boundary of~$\Omega$.
This represents a major challenge in solving~\eqref{prob}, which turns out to be not only a nonlinear problem but also a singular one.
To overcome this issue, we consider an approximating family of regularized problems, run the fixed-point argument to solve these problems, and pass to the limit via uniform estimates in Sobolev spaces of fractional order with boundary weights,
\textit{cf.}~\eqref{Lsptheta}-\eqref{normforLsptheta}.
Their definition is due to Lototsky~\cite{lot}:
we briefly outline their construction and prove some new results (\textit{cf}. Lemma~\ref{DsleH2slem}) in Section~\ref{sec:lototsky}.
The full proof of Theorem~\ref{thm:ex} is contained in Section~\ref{sec:ex}.

An important feature we will need in the proofs is a comparison principle.

\begin{proposition}[Weak comparison principle]\label{WCPprop}
Let~$\Omega \subseteq \R^N$ be a bounded open set and~$\Phi: \Omega \times \R \to \R$ be a Carath\'eodory function with~$\Phi(x, \cdot)$ non-decreasing for a.e.~$x \in \Omega$. Let~$\underline{w}, \overline{w} \in L^1_\s(\R^N) \cap C^2(\Omega)$ be two functions satisfying
\begin{equation} \label{WCPine}
\left\lbrace\begin{aligned}
- \lapl \underline{w} + \Phi\big(\, \cdot \,, \Ds \underline{w}\big) 
&\le - \lapl \overline{w} + \Phi\big(\, \cdot \,, \Ds \overline{w}\big) & & \mbox{in } \Omega \\
\underline{w} &\le \overline{w} & & \mbox{in } \R^N \setminus \overline{\Omega}.
\end{aligned}\right.
\end{equation}
For any~$x_0 \in \partial \Omega$, suppose in addition that
\begin{equation} \label{wover>wunderonboundary}
[-\infty, +\infty) \ni \limsup_{\Omega \ni x \rightarrow x_0} \underline{w}(x) \le \liminf_{\Omega \ni x \rightarrow x_0} \overline{w}(x) \in (-\infty, + \infty].
\end{equation}
Then,~$\underline{w} \le \overline{w}$ also in~$\Omega$.
\end{proposition}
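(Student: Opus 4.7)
I would argue by contradiction, targeting the function $v := \underline w - \overline w$ and assuming $\sup_\Omega v > 0$. The natural first attempt is to look at an interior maximum point $x_\ast$ of $v$; at such a point one has $-\lapl v(x_\ast) \geq 0$ and, since $v \leq 0$ in $\R^N \setminus \overline \Omega$ while $v(x_\ast) > 0$, the principal value integral defining the nonlocal operator gives $\Ds v(x_\ast) > 0$ strictly. The monotonicity of $\Phi(x_\ast, \cdot)$ then yields $\Phi(x_\ast, \Ds \underline w(x_\ast)) \geq \Phi(x_\ast, \Ds \overline w(x_\ast))$. Summing, one obtains only the weak inequality $[-\lapl \underline w + \Phi(\cdot, \Ds \underline w)](x_\ast) \geq [-\lapl \overline w + \Phi(\cdot, \Ds \overline w)](x_\ast)$, which is consistent with equality. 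Since $\Phi$ is merely non-decreasing, no direct contradiction emerges---this is the main obstacle, and I would resolve it by a strictification trick.

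Choose $R > 0$ so large that $\overline \Omega \subset B_R(0)$, and set $\zeta(x) := (R^2 - |x|^2)_+^\sigma$. This $\zeta$ lies in $C^2(B_R) \cap L^\infty(\R^N) \subset L^1_\sigma(\R^N)$ and is non-negative on $\R^N$. A direct computation shows $\lapl \zeta < 0$ in $B_R$, while the classical Getoor--Dyda formula yields $\Ds \zeta \equiv \kappa_{N,\sigma} > 0$ in $B_R$. For $\eps > 0$, define $\underline w_\eps := \underline w - \eps \zeta$. Using $\Ds \zeta \geq 0$ in $\Omega$ together with the monotonicity of $\Phi$, one deduces $\Phi(\cdot, \Ds \underline w_\eps) \leq \Phi(\cdot, \Ds \underline w)$ in $\Omega$; combining this with $\eps \lapl \zeta < 0$ and the weak subsolution hypothesis on $\underline w$ produces the strict inequality
\[
-\lapl \underline w_\eps + \Phi\bigl(\cdot, \Ds \underline w_\eps\bigr) < -\lapl \overline w + \Phi\bigl(\cdot, \Ds \overline w\bigr) \quad \text{in } \Omega.
\]
At the same time, $\underline w_\eps \leq \underline w \leq \overline w$ in $\R^N \setminus \overline \Omega$, and the boundary condition \eqref{wover>wunderonboundary} between $\underline w_\eps$ and $\overline w$ continues to hold, thanks to the continuity and non-negativity of $\zeta$ up to $\partial \Omega$.

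I would then repeat the contradiction argument for $v_\eps := \underline w_\eps - \overline w$. If $\sup_\Omega v_\eps > 0$, continuity of $v_\eps$ in $\Omega$, the preserved boundary condition (which rules out the supremum being approached along sequences converging to $\partial \Omega$), and boundedness of $\Omega$ force the supremum to be attained at an interior point $x_\eps \in \Omega$. The sign analysis at $x_\eps$ goes through exactly as sketched in the first paragraph and produces the weak inequality $[-\lapl \underline w_\eps + \Phi(\cdot, \Ds \underline w_\eps)](x_\eps) \geq [-\lapl \overline w + \Phi(\cdot, \Ds \overline w)](x_\eps)$, now contradicting the strict inequality just established. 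Hence $\underline w - \eps \zeta \leq \overline w$ in $\Omega$ for every $\eps > 0$, and sending $\eps \to 0^+$ concludes $\underline w \leq \overline w$ in $\Omega$.
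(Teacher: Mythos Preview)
Your proof is correct and follows essentially the same approach as the paper: both use the barrier $u_\s(x)=(R^2-|x|^2)_+^\s$ (your $\zeta$) to strictify the differential inequality, locate an interior maximum of the difference, and derive a contradiction from $-\lapl v\ge 0$, $\Ds v\ge 0$, and the monotonicity of $\Phi$. The only cosmetic difference is that the paper perturbs the supersolution upward ($\overline{w}_\eps=\overline{w}+\eps u_\s$) while you perturb the subsolution downward ($\underline{w}_\eps=\underline{w}-\eps\zeta$), which is entirely symmetric.
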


Here,~$L^1_\s(\R^N)$ denotes the space of functions~$v\in L^1_{\loc}(\R^N)$ for which~$(1 + |\, \cdot \,|)^{- N - 2 \s} v \in L^1(\R^N)$. As is known, this assumption on~$v$, along with its local~$C^2$ (or~$C^{2 \s + \varepsilon}$) regularity, is enough to have~$\Ds v$ well-defined in the pointwise sense as the integral operator~\eqref{def:Ds}.
We will mostly apply Proposition~\ref{WCPprop} to functions~$\underline{w}$ and~$\overline{w}$ continuous up to the boundary of~$\Omega$. In this case, assumption~\eqref{wover>wunderonboundary} simply boils down to
\[
\underline{w} \le \overline{w} \quad \mbox{on } \partial \Omega.
\]
Notice, however, that~\eqref{wover>wunderonboundary} makes sense even when~$\underline{w}|_\Omega$ and~$\overline{w}|_\Omega$ cannot be continuously extended up to~$\partial \Omega$. This feature will be crucial in order to deal with solutions of~\eqref{prob} which blow up at the boundary.
Section~\ref{sec:CP} deals with the proof of Proposition~\ref{WCPprop} and its consequences.

Passing to non-existence results, we have the following.
\begin{theorem}[Non-existence in the critical and supercritical regimes]\label{thm:non-ex}
Let~$g \in C^0(\partial\Omega)$ and $h \in L^\infty(\R^N \setminus \overline{\Omega})$ 
be such that~$g \not\le 0$ on~$\partial \Omega$ and~$h \le 0$ in~$\R^N \setminus \overline{\Omega}$. 
If $\sigma p \geq 1$, then problem
\begin{equation}\label{probf=0}
\left\lbrace\begin{aligned}
-\lapl u+\big|\Ds u\big|^{p-1}\Ds u &= 0 & & \hbox{in }\Omega \\
u &= g & & \hbox{on }\partial\Omega \\
u &= h & & \hbox{in }\R^N\setminus\overline\Omega
\end{aligned}\right.
\end{equation}
has no solution~$u\in L^\infty(\R^N) \cap C^2(\Omega) \cap C^0(\overline\Omega)$.
\end{theorem}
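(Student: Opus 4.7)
The plan is to argue by contradiction: suppose $u$ is a solution of~\eqref{probf=0}. Since $g \not\leq 0$ on $\partial\Omega$, pick $x_0 \in \partial\Omega$ with $g(x_0) > 0$; by continuity of $u$ up to the boundary there exist $m, \rho > 0$ such that $u \geq m$ on $\overline\Omega \cap B_{2\rho}(x_0)$. The heuristic is that the jump of $u$ at $\partial\Omega$, from $u \geq m$ in $B_{2\rho}(x_0) \cap \Omega$ to $u = h \leq 0$ outside $\Omega$, forces $\Ds u(x)$ to blow up like $\dist(x,\partial\Omega)^{-2\sigma}$ as $x \to \partial\Omega$; through the equation, $\lapl u = |\Ds u|^{p-1}\Ds u$ then blows up like $\dist(x,\partial\Omega)^{-2\sigma p}$, and for $\sigma p \geq 1$ this singularity is too strong to be compatible with $u$ being bounded.

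The first key step is a pointwise lower bound
\[\Ds u(x) \geq c_1\, \delta(x)^{-2\sigma} \qquad \text{for every } x \in B_{\rho/2}(x_0)\cap \Omega \text{ with } \delta(x) \text{ small enough,}\]
where $\delta := \dist(\cdot, \partial\Omega)$ and $c_1 > 0$. I would prove this by splitting the integral~\eqref{def:Ds} according to whether $y$ belongs to $\R^N \setminus \overline\Omega$ or to $\Omega$. The exterior contribution is bounded below by $m \int_{\R^N\setminus\overline\Omega} |x-y|^{-N-2\sigma}\,dy \geq c\,m\,\delta(x)^{-2\sigma}$, via a half-space comparison exploiting the $C^2$-regularity of $\partial\Omega$. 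For the interior principal value integral, the point is to choose $\rho$ so small that the oscillation of $u$ on $B_\rho(x_0)\cap\overline\Omega$ is much less than $m$: this forces the contribution from $y \in B_\rho(x_0)\cap\Omega$ with $|y-x|\geq \delta(x)/2$ to be negligible compared to $m\delta(x)^{-2\sigma}$, while the contribution from $y \in \Omega \setminus B_\rho(x_0)$ is an $O(1)$ term (as $|x-y| \geq \rho/2$). The remaining near-diagonal piece $|y-x| < \delta(x)/2$ is controlled via the $C^2$-regularity of $u$ in $\Omega$ and yields only a $O(\delta(x)^{2-2\sigma})$ correction. Monotonicity of $t \mapsto |t|^{p-1}t$ then gives
\[\lapl u(x) = |\Ds u(x)|^{p-1}\Ds u(x) \geq c_2\, \delta(x)^{-2\sigma p}, \qquad c_2 := c_1^p,\]
in the same region.

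To close the argument, I would test the equation against $\phi := v\eta$, where $v \in C^2(\Omega)\cap C^0(\overline\Omega)$ is the Dirichlet torsion function (that is, $-\lapl v = 1$ in $\Omega$ and $v = 0$ on $\partial\Omega$; by classical theory $v \simeq \delta$ near $\partial\Omega$ and $\nabla v \in L^\infty(\Omega)$) and $\eta \in C^\infty_c(B_{\rho/2}(x_0))$ is a non-negative cut-off identically equal to $1$ on $B_{\rho/4}(x_0)$. Writing the weak formulation of~\eqref{probf=0} on compactly supported approximations $\phi_k \in C^2_c(\Omega)$ of $\phi$ (for instance $\phi_k := \phi\, \zeta_k(\delta)$ for a suitable monotone cut-off $\zeta_k$) and passing to the limit, one obtains
\[\int_\Omega \phi\,|\Ds u|^{p-1}\Ds u \, dx = \lim_{k\to \infty} \int_\Omega u\, \lapl \phi_k\, dx = \int_\Omega u\, \lapl\phi\, dx + (\text{explicit finite boundary terms}).\]
The right-hand side is finite since $u$ and $\lapl \phi$ are bounded on $\Omega$. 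The left-hand side, by contrast, has $|\Ds u|^{p-1}\Ds u > 0$ on $\{\delta < \delta_0\} \cap B_{\rho/4}(x_0) \cap \Omega$ (from the pointwise bound above) and only bounded contributions elsewhere on $\supp \phi$, so
\[\int_\Omega \phi\, |\Ds u|^{p-1}\Ds u\, dx \geq c_3 \int_{B_{\rho/4}(x_0)\cap \Omega\cap \{\delta < \delta_0\}} \delta(y)^{1-2\sigma p}\,dy - C_3,\]
which diverges exactly when $1 - 2\sigma p \leq -1$, i.e., when $\sigma p \geq 1$. The contradiction completes the proof.

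The hard part is unquestionably the lower bound on $\Ds u$: the interior principal value integral could in principle cancel the exterior blowup, and preventing this requires combining the continuity of $u$ at $x_0$ (to shrink $\rho$) with the local $C^2$-regularity of $u$ in $\Omega$ (for the near-diagonal piece). Once that bound is in hand, the conclusion via the test function $\phi = v\eta$ is a standard divergence-theorem computation whose only subtlety is the extension of the weak formulation from $C^2_c(\Omega)$ to $\phi$, which vanishes on $\partial\Omega$ but is not compactly supported in $\Omega$.
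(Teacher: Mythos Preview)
Your approach is genuinely different from the paper's: rather than building supersolutions~$\overline{w}_{\alpha,\varepsilon}=m(\chi_{\overline\Omega}-\varepsilon\,\tau^\alpha\chi_{\overline\Omega})$ and invoking the weak comparison principle (which forces any solution to satisfy~$u\le m(1-\varepsilon_0)$ in~$\Omega$, contradicting~$u|_{\partial\Omega}=g$), you attempt a direct testing argument. The testing step itself (your Step~3) is fine, and the overall architecture is reasonable. The problem is Step~1.

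Your claimed bound on the near-diagonal piece,
\[
\Big|\,\text{p.v.}\!\int_{B_{\delta(x)/2}(x)}\frac{u(x)-u(y)}{|x-y|^{N+2\s}}\,dy\,\Big|
=O\big(\delta(x)^{2-2\s}\big),
\]
tacitly assumes that~$\|D^2u\|_{L^\infty(B_{\delta(x)/2}(x))}$ is uniformly bounded as~$x\to\partial\Omega$. This is neither part of the hypotheses nor derivable from them: you only know~$u\in C^2(\Omega)\cap C^0(\overline\Omega)$, and the second derivatives may well blow up at the boundary. In fact the equation itself forces them to. If~$\Ds u\asymp\delta^{-2\s}$ as you want, then~$\lapl u\asymp\delta^{-2\s p}$, and interior estimates give~$\|D^2 u\|_{L^\infty(B_{\delta(x)/2}(x))}\gtrsim\delta(x)^{-2\s p}$. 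The near-diagonal piece then scales like~$\delta^{-2\s p}\cdot\delta^{2-2\s}=\delta^{2-2\s-2\s p}$, and precisely in the regime~$\s p\ge 1$ of the theorem this exponent is~$\le -2\s$: the near-diagonal piece is of the \emph{same} order as the exterior contribution, with the opposite sign (since~$\Ds u>0$ implies~$-\lapl u<0$, i.e.\ subharmonicity). So the cancellation you worry about in your last paragraph is not hypothetical --- it is exactly what the scaling predicts, and your argument does not rule it out.

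This circularity (bounding~$\Ds u$ requires bounding~$D^2u$, which through the equation requires bounding~$\Ds u$) is not obviously breakable by bootstrap in the regime~$\s p\ge 1$; for~$\s p<1$ the exponents would cooperate, but that is the existence regime. The paper's barrier-and-comparison proof sidesteps the issue entirely because it never needs any quantitative interior regularity of~$u$: the supersolutions~$\overline{w}_{\alpha,\varepsilon}$ are explicit, their fractional Laplacians are computed from Lemma~3.2, and the comparison principle (Proposition~1.2) only uses that~$u$ and~$\overline{w}_{\alpha,\varepsilon}$ are~$C^2$ in~$\Omega$ and ordered on~$\partial\Omega$ and in~$\R^N\setminus\overline\Omega$.
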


Note that threshold~$p=1/\s$ is indeed quite natural. 
The analogue local problem
\begin{equation}\label{prob-clas}
\left\lbrace\begin{aligned}
-\lapl u+\big|\grad u\big|^{p} &= 0 & & \hbox{in }\Omega \\
u &= g & & \hbox{on }\partial\Omega
\end{aligned}\right.
\end{equation}
is known to always have a solution for $p\in[1,2]$~\cite{tomi}*{Hilfssatz 3} 
(or also~\cite{serrin}*{Section 11} for a more general statement),
whereas existence is lost in general for~$p>2$, see~\cite{serrin}*{Theorem~1, Section~16}.
Interestingly, in this case the critical power~$p=2$ is included in the
existence regime, whereas in~\eqref{prob}~$p=1/\s$
falls into the nonexistence one.

The next theorem shows that condition~\eqref{hypo-data} on the right-hand side is almost sharp.

\begin{theorem}[Non-existence for large sources]\label{thm:non-ex-sourc}
Let~$f\in L^\infty_{\loc}(\Omega)$ be such that
\[
{\dist(\cdot,\partial\Omega)}^{2}f \ge \kappa \quad\text{in }\Omega,
\]
for some~$\kappa > 0$. Then, for all~$p$ as in~\eqref{pqle1},
problem
\begin{equation}\label{probg,h=0}
\left\lbrace\begin{aligned}
-\lapl u+\big|\Ds u\big|^{p-1}\Ds u &= f & & \text{in }\Omega \\
u &= 0 & & \text{on }\partial\Omega \\
u &= 0 & & \text{in }\R^N\setminus\overline\Omega
\end{aligned}\right.
\end{equation}
has no solution~$u\in L^\infty(\R^N) \cap C^2(\Omega) \cap C^0(\overline\Omega)$.
\end{theorem}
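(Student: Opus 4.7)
The plan is to argue by contradiction: suppose a solution $u \in L^\infty(\R^N) \cap C^2(\Omega) \cap C^0(\overline\Omega)$ of \eqref{probg,h=0} exists, then test the equation against the torsion function of $\Omega$ and derive a contradiction by comparing the behavior of the three terms near $\partial\Omega$.

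First, Proposition~\ref{WCPprop} applied with $\underline w = 0$ and $\overline w = u$ (legitimate since $f \ge \kappa \dist(\cdot,\partial\Omega)^{-2} \ge 0$ and both boundary and exterior data vanish) gives $u \ge 0$ in $\Omega$. Set $M := \|u\|_{L^\infty(\R^N)}$ and $d(x) := \dist(x,\partial\Omega)$. The technical heart of the argument is the pointwise decay
\begin{equation*}
|\Ds u(x)| \le C\, d(x)^{-2\sigma} \qquad \text{for every } x\in\Omega,
\end{equation*}
with $C$ depending only on $M$, $\Omega$, $\sigma$, $p$, and local data of $f$. The contribution to $\Ds u(x)$ from $\R^N\setminus B_{d(x)/2}(x)$ is immediate from $\|u\|_\infty\le M$ and $u \equiv 0$ outside $\Omega$; the near-field contribution is controlled by a rescaling argument combined with interior elliptic regularity, and closed via a bootstrap through the equation $-\lapl u = f - |\Ds u|^{p-1}\Ds u$. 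The bootstrap succeeds precisely because $2\sigma p < 2$ in the subcritical regime, making the nonlocal term a lower-order perturbation of the Laplacian at the scale $d(x)$. As a consequence, one also derives $|\nabla u(x)|\le C d(x)^{-1}$ via interior gradient estimates for $-\lapl u$.

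Let $\Psi\in C^{1,1}(\overline\Omega)$ be the torsion function of $\Omega$: $-\lapl\Psi = 1$ in $\Omega$, $\Psi = 0$ on $\partial\Omega$, with $c_1 d \le \Psi \le c_2 d$. Multiplying the equation by $\Psi$, integrating over $\Omega_\eps := \{d > \eps\}$, and applying Green's second identity yields
\begin{equation*}
\int_{\Omega_\eps} u\, dx + \int_{\partial\Omega_\eps}\bigl[u\,\partial_\nu\Psi - \Psi\,\partial_\nu u\bigr] dS + \int_{\Omega_\eps}|\Ds u|^{p-1}\Ds u\cdot\Psi\, dx = \int_{\Omega_\eps} f\,\Psi\, dx.
\end{equation*}
Thanks to $u \to 0$ on $\partial\Omega$, $\Psi \lesssim \eps$ on $\partial\Omega_\eps$, and $|\nabla u| \lesssim \eps^{-1}$, the boundary integrals stay bounded as $\eps\downarrow 0$. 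The decay estimate yields $|\Ds u|^p\Psi \le C d^{1-2\sigma p}$, which is integrable on $\Omega$ since $\sigma p < 1$, so the nonlocal integral converges absolutely. Consequently the left-hand side of the above identity remains bounded as $\eps\downarrow 0$. However, the lower bound $f \ge \kappa d^{-2}$ gives
\begin{equation*}
\int_{\Omega} f\,\Psi\, dx \ge c_1\kappa \int_{\Omega} d(x)^{-1}\, dx = +\infty,
\end{equation*}
a contradiction. The main obstacle is the pointwise estimate on $\Ds u$ near $\partial\Omega$: the near-field piece couples to interior regularity through the equation, and one must iterate the bounds on $\Ds u$ and $\nabla u$ simultaneously; the iteration closes in the subcritical regime $\sigma p < 1$, paralleling the dichotomy in Theorems~\ref{thm:ex}--\ref{thm:non-ex}.
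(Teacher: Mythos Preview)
Your approach is genuinely different from the paper's: you attempt an integral identity against the torsion function, while the paper builds a family of subsolutions $\varepsilon v_\alpha$ (with $v_\alpha = \tau^\alpha\chi_{\overline\Omega}$, $\alpha\in(0,\s)$), uses Proposition~\ref{WCPprop} to get $u\ge\varepsilon v_\alpha$, and then sends $\alpha\downarrow 0$ to conclude $u\ge\varepsilon$ throughout $\Omega$---contradicting $u\in C^0(\overline\Omega)$ with $u=0$ on $\partial\Omega$.

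However, your argument has a genuine gap at the key estimate $|\Ds u(x)|\le C\,\delta(x)^{-2\s}$ (and its companion $|\nabla u|\le C\,\delta^{-1}$). The hypothesis of the theorem gives only a \emph{lower} bound $f\ge\kappa\,\delta^{-2}$; no upper bound on $f$ is assumed. Your bootstrap runs through $-\lapl u = f - |\Ds u|^{p-1}\Ds u$ together with interior elliptic estimates, and to control $D^2 u$ on $B_{\delta(x)/2}(x)$ you need $\|f\|_{L^\infty(B_{\delta(x)/2}(x))}$, which can be arbitrarily large near $\partial\Omega$ (for instance, $f=\delta^{-3}$ satisfies the hypothesis). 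The subcriticality $\s p<1$ only makes the \emph{nonlocal} term lower-order at scale $\delta(x)$; it says nothing about $f$. Without pointwise control of $\Ds u$ and $\nabla u$, you cannot show that $\int_{\Omega}|\Ds u|^{p}\Psi<\infty$ or that $\int_{\partial\Omega_\varepsilon}\Psi\,\partial_\nu u\,dS$ stays bounded, so the identity does not deliver a contradiction. Your argument would go through under the additional two-sided hypothesis $\kappa\,\delta^{-2}\le f\le C\,\delta^{-2}$, but not as the theorem is stated. The barrier method sidesteps this entirely: verifying that $\varepsilon v_\alpha$ is a subsolution only needs the lower bound on $f$, and comparison then gives $u\ge\varepsilon$ directly.
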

Theorems~\ref{thm:non-ex} and~\ref{thm:non-ex-sourc} are proved in Section~\ref{sec:non-ex}.

We finally show, in Section~\ref{sec:large}, how the nonlinear character of~\eqref{prob}
allows for solutions that become singular\footnote{These solutions
appear in classical semilinear problems when the nonlinearity
fulfils the so-called \textit{Keller-Osserman condition}~\cites{keller,osserman}. 
They also show up in problems of the same type as~\eqref{prob-clas}
as remarked, for example, in~\cite{ll}. For fractional order equations the situation is more involved,
the interested reader might want to check~\cites{chen-felmer,fq,a1,a2}.} at~$\partial\Omega$
and are therefore called \textit{boundary blow-up solutions} or, simply, \textit{large solutions}.

\begin{theorem}[Large solutions]\label{theo:main} 
For any
\begin{equation*}
p\in\bigg(\frac{3-\s}{1+\s},\frac1\s\bigg),
\end{equation*}
problem 
\begin{align}\label{large-prob}
\left\lbrace\begin{aligned}
-\lapl u+\big|\Ds u\big|^{p-1}\Ds u &= 0 & & \hbox{in }\Omega \\
u &= \infty & & \hbox{on }\partial\Omega \\
u &= 0 & & \hbox{in }\R^N\setminus\overline\Omega
\end{aligned}\right.
\end{align}
admits a solution $u\in L^1(\R^N) \cap C^2(\Omega)$.
Moreover, there exists a~$C>0$ such that this solution satisfies
\begin{equation} \label{ugrowth}
0\ <\ u\ \leq\ C\,\dist{(\cdot,\partial\Omega)}^{-2(1-\s p)/(p-1)}\qquad \hbox{in }\Omega.
\end{equation}
\end{theorem}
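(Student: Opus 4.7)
My plan is the standard Keller--Osserman-style approximation: solve the problem with finite boundary data $u = n$ for each $n \in \N$, then pass to the limit as $n \to \infty$. Concretely, Theorem~\ref{thm:ex} (applied with $f \equiv 0$, $g \equiv n$, $h \equiv 0$, which is legitimate because $p < 1/\s$) yields a unique $u_n \in L^\infty(\R^N) \cap C^2(\Omega) \cap C^0(\overline\Omega)$. The weak comparison principle (Proposition~\ref{WCPprop}) applied to the pairs $(0, u_n)$ and $(u_n, u_{n+1})$ gives $0 < u_n \le u_{n+1}$ in $\Omega$, so $u(x) := \lim_n u_n(x) \in (0, +\infty]$ is well defined by monotonicity.

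The crux is to produce a supersolution $V$ with boundary blow-up rate $d(x)^{-\beta}$, where $d(x) := \dist(x, \partial\Omega)$ and
\[
\beta := \frac{2(1-\s p)}{p-1} > 0.
\]
This exponent is dictated by matching the formal leading orders $-\lapl(d^{-\beta}) \sim d^{-\beta-2}$ and $\big|\Ds d^{-\beta}\big|^{p-1}\Ds d^{-\beta} \sim d^{-p(\beta+2\s)}$, which coincide precisely when $p(\beta+2\s) = \beta+2$. A direct algebraic check shows that $p > (3-\s)/(1+\s)$ is equivalent to $\beta < 1-\s$, a threshold that plays a double role: it forces $\beta < 1$ (so that $d^{-\beta} \in L^1(\R^N)$) and lies below the critical boundary regularity for the fractional Laplacian, ensuring that the singular integral defining $\Ds d^{-\beta}$ converges to a positive constant of the expected order. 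Taking $V(x) = C d(x)^{-\beta}$ in a boundary neighborhood, smoothly and positively extended in the interior of $\Omega$, and setting $V \equiv 0$ in $\R^N \setminus \overline\Omega$, one then verifies, for $C$ large enough to absorb lower-order corrections,
\[
-\lapl V + \big|\Ds V\big|^{p-1}\Ds V \ge 0 \ \text{ in } \Omega, \qquad \lim_{\Omega \ni x \to x_0} V(x) = +\infty \ \text{ for every } x_0 \in \partial\Omega.
\]

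With $V$ at hand, Proposition~\ref{WCPprop} applied to $u_n$ and $V$ yields $u_n \le V$ in $\Omega$ uniformly in $n$: the boundary hypothesis~\eqref{wover>wunderonboundary} is trivial since $\limsup_{x \to x_0} u_n(x) = n$ is finite while $\liminf_{x \to x_0} V(x) = +\infty$, and outside $\overline\Omega$ both functions vanish. Thus the limit $u$ inherits the bound~\eqref{ugrowth} and, because $\beta < 1$, lies in $L^1(\R^N)$ by monotone convergence. Standard interior elliptic regularity, combined with the uniform majorant $V \in L^1_\s(\R^N)$ to dominate the nonlocal tails of $\Ds u_n$, lets us apply the dominated convergence theorem and local Schauder-type estimates to upgrade $u_n \to u$ to $C^2_\loc(\Omega)$. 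Consequently, $u \in C^2(\Omega)$ solves the equation classically in $\Omega$. Finally, $\liminf_{x \to x_0} u(x) \ge \liminf_{x \to x_0} u_n(x) = n$ for every $n$ forces $u = +\infty$ on $\partial\Omega$ in the required sense.

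The main obstacle is the supersolution step. Computing $\Ds(d^{-\beta})$ for a function set to zero outside $\overline\Omega$ is a delicate singular integral in which the jump across $\partial\Omega$ contributes a nontrivial lower-order term; the very strict lower bound $p > (3-\s)/(1+\s)$, i.e.\ $\beta < 1-\s$, is precisely what guarantees that the principal part is positive and large enough to dominate $-\lapl V$ after absorbing the geometric error from the boundary curvature and the interior truncation. Once the barrier is secured, all subsequent steps are routine monotone-limit and regularity arguments.
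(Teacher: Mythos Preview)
Your overall scheme---solve with boundary data $g\equiv n$, compare against a barrier of order $d^{-\beta}$ with $\beta=2(1-\s p)/(p-1)$, and pass to the monotone limit---is exactly the paper's. Two gaps remain.

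\emph{Barrier in the interior.} Taking $V=C\,d^{-\beta}$ near $\partial\Omega$, smoothly extended inside, and choosing $C$ large does not force the supersolution inequality away from the boundary. Near $\partial\Omega$ your balance is correct (both sides scale like $\delta^{-\beta-2}$ and $p>1$ lets $C^p$ beat $C$), but at interior points $\Ds V$ may well be negative---the large values of $V$ in the boundary layer pull the singular integral down---and then $-C\lapl V+C^p|\Ds V|^{p-1}\Ds V\to-\infty$ as $C\to\infty$. The paper's barrier (Lemma~\ref{lem:super}) is $\overline u=A V_\gamma+B\chi_\Omega$: since $\Ds\chi_\Omega\ge C_3^{-1}\delta^{-2\s}>0$ throughout $\Omega$ by~\eqref{fraclapv0}, a large $B$ makes $\Ds\overline u$ positive everywhere, and the interior inequality then follows from the boundedness of $\lapl V_\gamma$ and $\Ds V_\gamma$ on $\Omega\setminus\Gamma_{\delta_1}$.

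\emph{Compactness for the limit.} The phrase ``standard interior elliptic regularity \ldots\ and local Schauder-type estimates'' hides a genuine circularity. To extract uniform local $C^{2\s+\varepsilon}$ (and then $C^2$) bounds on $u_n$ from $-\lapl u_n=-|\Ds u_n|^{p-1}\Ds u_n$, you must first bound the right-hand side on compact sets; the tail of $\Ds u_n$ is indeed dominated via $0\le u_n\le V\in L^1_\s(\R^N)$, but the principal-value part over a small ball around $x$ requires precisely a uniform $C^{2\s+\varepsilon}$ seminorm of $u_n$ there---which is what you are trying to prove. Neither Schauder nor local Calder\'on--Zygmund closes this loop unaided. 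The paper breaks it with the weighted Sobolev machinery of Section~\ref{sec:lototsky}: the weighted Calder\'on--Zygmund estimate (Proposition~\ref{lotest}) together with Lemma~\ref{DsleH2slem} (applied with $r=1$, so only $\|u_n\|_{L^1(\Omega)}\le\|\overline u\|_{L^1(\Omega)}$ enters) and interpolation---using $\s p<1$ to absorb---yield a uniform bound on $\|u_n\|_{L^{2,q}_\theta(\Omega)}$ for every $q>1$ and suitably large $\theta$, whence $C^{1,\alpha}_\loc$ compactness and then classical interior regularity. This is the step your sketch does not supply.
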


\subsection{Notation}\label{notations}

We will use the following notation without further notice.

We set~$\delta=\dist(\cdot,\R^N\setminus\Omega)$ in~$\R^N$. By~$D^j$ we denote the collection of partial derivatives of order $j\in\N$.
The letters~$C$ and~$\overline{C}$ will be used to indicate constants that have values larger than~$1$ and that may change from line to line. Constants denoted by~$C$ will depend only on the structural quantities involved in the problem (\textit{i.e.},~$N$,~$\Omega$,~$p$,~$\s$, and~$\alpha$), whereas~$\overline{C}$ may also depend on the norms of the data---$\| \delta^{2 - \alpha} f \|_{L^\infty(\Omega)}$,~$\| g \|_{L^\infty(\partial \Omega)}$, and~$\| h \|_{L^\infty(\R^N \setminus \Omega)}$. 
Whenever a constant also depends on an additional quantity, we will emphasize it by using subscripts---for instance,~$C_j$ will denote a constant that also depends on~$j$ on top of the previously specified parameters.

\section{Fractional Sobolev spaces with weights}\label{sec:lototsky}

Following~\cite{lot}, we introduce weighted spaces~$L^{s,p}_\theta$ modelled upon Bessel potential spaces, from which we borrow the usual notation. 
For~$s \ge 0$ and~$p \ge 1$, we let~$L^{s, p}(\R^N)$ denote the Bessel potential space, 
obtained as the completion of~$C_c^\infty(\R^N)$ with respect to the norm
\begin{equation} \label{bespotnorm}
\| u \|_{L^{s, p}(\R^N)} := \big\|  {( 1 - \lapl  )}^{s/2} u \big\|_{L^p(\R^N)}.
\end{equation}
We recall that~$(1 - \lapl)^{s/2}$ is the operator defined by
\[
{(1-\lapl)}^{s/2} u := \F^{-1} \Big( \big(1 + |\cdot|^2 \big)^{s / 2} \F u \Big)
\quad \mbox{on any } u\in C^\infty_c(\R^N),
\]
where~$\F$ denotes the Fourier transform. 
Note that~$L^{0, p}(\R^N)$ reduces to the standard Lebesgue space~$L^p(\R^N)$.

For any~$k \in \Z$, we define
\[
A_k := \Big\{ x \in \Omega : 2^{- k - 1} < \delta(x) < 2^{- k + 1} \Big\}.
\]
Let~${(\zeta_k )}_{k \in \Z}$ be a smooth partition of unity, namely a family of non-negative functions~$\zeta_k \in C^\infty_c(\R^N)$ such that~$\supp(\zeta_k) \subseteq A_k$,
\[
|D^j \zeta_k(x)| \le C_{j} \, 2^{jk} \quad \mbox{for any } x \in \R^N \mbox{ and } j \in \N_{0},
\]
for some constant~$C_{|\alpha|} > 0$ and with~$|\alpha| = \alpha_1 + \ldots + \alpha_N$, and
\[
\sum_{k \in \Z} \zeta_k(x) = 1 \quad \mbox{for any } x \in \Omega.
\]

Given another parameter~$\theta \in \R$, introduce the space
\begin{equation}\label{Lsptheta}
L^{s, p}_\theta(\Omega) := \Big\{ u \in L^p(\Omega) : \| u \|_{L^{s, p}_\theta(\Omega)} < +\infty \Big\},
\end{equation}
where
\begin{equation} \label{normforLsptheta}
\| u \|_{L^{s, p}_\theta(\Omega)}^p := \sum_{k \in \Z} 2^{ - k \theta} \big\| \zeta_{k}(2^{- k} \cdot) u(2^{- k} \cdot) \big\|_{L^{s, p}(\R^N)}^p .
\end{equation}
For simplicity, we just write~$L^p_\theta(\Omega)$ instead of~$L^{0, p}_\theta(\Omega)$.

The space~$L^{s,p}_\theta(\Omega)$ is a Sobolev space with a weight at the boundary~$\partial\Omega$, introduced by Lototsky in~\cites{lot0,lot}.
In the next statement we collect some of its basic properties that will be used in the sequel---see~\cite{lot}*{Proposition~2.2} for their proofs.

\begin{proposition}[\cite{lot}*{Proposition 2.2}] \label{lotprops}
The following statements hold true.
\begin{enumerate}[label=$\roman*)$,leftmargin=3em]
\item The space~$C^\infty_c(\Omega)$ is dense in~$L^{s, p}_\theta(\Omega)$.
\item The space~$L^{s, p}_\theta(\Omega)$ is independent of the choice of partition of unity~${( \zeta_k)}_{k\in\Z}$, and different partitions lead to equivalent norms.
\item The quantity
\begin{align}\label{equivalent lototsky norm}
\| u \|_{L^{p}_\theta(\Omega)}^* := \Big( \int_\Omega \big|u(x)\big|^p \, {\delta(x)}^{\theta - N} \, dx \Big)^{1 / p}
\end{align}
defines an equivalent norm for the space~$L^p_\theta(\Omega)$.
\item If~$s = k$ is a non-negative integer, then
\[
L^{k, p}_\theta(\Omega) = \Big\{ u \in L^p_\theta(\Omega) : \delta^j D^j u \in L^p_\theta(\Omega) \mbox{ for all } j = 1, \ldots, k \Big\}
\]
and the norm
\[
\| u \|_{L^{k, p}_\theta(\Omega)}^* := \bigg( \sum_{j = 0}^k \big( \| \delta^j D^j u \|_{L^p_\theta(\Omega)}^* \big)^p \bigg)^{1/p}
\]
is equivalent to the one defined in~\eqref{normforLsptheta}.
\item For~$s_i \ge 0$,~$p_i > 1$,~$\theta_i \in \R$ ($i = 0, 1$), and~$\nu \in (0, 1)$, it holds
\[
L^{s, p}_\theta(\Omega) = \Big[ L^{s_0, p_0}_{\theta_0}(\Omega), L^{s_1, p_1}_{\theta_1}(\Omega) \Big]_\nu,
\]
with
\[
s := (1 - \nu) s_0 + \nu s_1, \quad \frac{1}{p} := \frac{1 - \nu}{p_0} + \frac{\nu}{p_1}, \quad \theta := (1 - \nu) \theta_0 + \nu \theta_1,
\]
and where~$[X, Y]_\nu$ denotes the complex interpolation space of~$X$ and~$Y$.
\end{enumerate}
\end{proposition}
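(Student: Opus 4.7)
The plan is to derive each of the five items by reducing it to the corresponding classical property of the Bessel potential space $L^{s,p}(\R^N)$ at a fixed scale, and then summing over $k \in \Z$. The crucial technical point that makes this scheme work is that the \emph{rescaled} cut-off $\tilde\zeta_k(y) := \zeta_k(2^{-k} y)$ satisfies $|D^j \tilde\zeta_k(y)| \le C_j$ \emph{uniformly in $k$} (because the bound $|D^j \zeta_k| \le C_j 2^{jk}$ is precisely cancelled by the chain-rule factor $2^{-jk}$), and is supported in a fixed bounded annulus $\{1/2 < |y| < 2\}$. Consequently, multiplication by $\tilde\zeta_k$ is a bounded operator on $L^{s,p}(\R^N)$ with norm independent of $k$, which can be verified via the Mihlin--H\"ormander theorem applied to $(1-\lapl)^{s/2}$ composed with the multiplication, or via standard paraproduct estimates.

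For item $(iii)$, the change of variables $y = 2^{-k} x$ turns the $k$-th summand into $2^{kN} \int \zeta_k(y)^p |u(y)|^p \, dy$, and combining $2^{-k\theta} \cdot 2^{kN} = 2^{k(N-\theta)}$ with $\delta(y) \asymp 2^{-k}$ on $\supp \zeta_k \subseteq A_k$ reproduces the weight $\delta^{\theta - N}$; the partition-of-unity property and the bounded overlap of the annuli $A_k$ yield the equivalence with $\|u\|_{L^p_\theta(\Omega)}^*$. Item $(ii)$ follows by writing each new cut-off as a finite sum (of uniformly bounded cardinality) of products with the old ones, and applying the uniform multiplier bound above. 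For item $(iv)$, the classical equivalence $\|v\|_{L^{k,p}(\R^N)}^p \sim \sum_{j=0}^k \|D^j v\|_{L^p}^p$ applied to $v(y) = \zeta_k(2^{-k} y) u(2^{-k} y)$, together with Leibniz's rule and the change of variables, produces exactly weights of the form $\delta^j D^j u$. Item $(v)$ is obtained by applying the standard complex interpolation identity for $L^{s,p}(\R^N)$ inside each summand, and then invoking interpolation of weighted $\ell^p$-sequence spaces (the $\nu$-convex combination of $p_0$ and $p_1$ being chosen so that a single $\ell^p$ appears on the right).

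Item $(i)$ demands a two-step approximation. First, truncate $u$ to $u_M := \sum_{|k| \le M} \zeta_k u$, whose support lies in a fixed compact subset of $\Omega$; by absolute convergence of the series defining $\|u\|_{L^{s,p}_\theta(\Omega)}^p$, one has $\|u - u_M\|_{L^{s,p}_\theta(\Omega)} \to 0$ as $M \to \infty$. Second, mollify $u_M$ with a standard mollifier $\rho_\eps$ of radius $\eps < \dist(\supp u_M, \partial\Omega)$: since only finitely many scales $k$ contribute to $u_M$, and since mollification is continuous on each $L^{s,p}(\R^N)$, one gets $\|u_M - u_M * \rho_\eps\|_{L^{s,p}_\theta(\Omega)} \to 0$ as $\eps \downarrow 0$, producing an approximating sequence in $C^\infty_c(\Omega)$.

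The principal obstacle throughout is uniformity in the scale parameter $k$: every estimate---the multiplier bounds on $\tilde\zeta_k$, the comparability $\delta \asymp 2^{-k}$ on $A_k$, the overlap count of the $A_k$'s, and the interpolation constants---must be independent of $k$, since the norm sums over all $k \in \Z$. Once this uniformity is codified (it is essentially all a consequence of the scale-invariant bound on $\tilde\zeta_k$), each of (i)--(v) collapses to a rescaling combined with a textbook fact about $L^{s,p}(\R^N)$.
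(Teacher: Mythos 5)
The paper does not prove this proposition; it is quoted verbatim from Lototsky, \cite{lot}*{Proposition~2.2}, and the text explicitly defers to that reference for the proofs. So there is no internal argument to compare yours against, and the question is only whether your sketch is a sound reconstruction of the standard argument.

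Broadly, yes. The pivotal observation---that the rescaled cut-offs $\tilde\zeta_k(y):=\zeta_k(2^{-k}y)$ are uniformly bounded in every $C^j$, hence multiplication by $\tilde\zeta_k$ is bounded on $L^{s,p}(\R^N)$ with a constant independent of $k$---is exactly the right engine, and the way you feed it into (ii), (iii), (iv) and the two-step truncation/mollification argument for (i) matches the Lototsky-style proof. For (v), your phrasing compresses a nontrivial step: one realizes $L^{s,p}_\theta(\Omega)$ as a retract of a weighted vector-valued sequence space $\ell^p_{w}(\Z;L^{s,p}(\R^N))$ via $u\mapsto(\tilde\zeta_k\,u(2^{-k}\cdot))_k$, and then invokes both the interpolation identity for Bessel potential spaces and that for vector-valued weighted $\ell^p$-spaces; the retraction and coretraction are uniformly bounded precisely because of the multiplier bound. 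Your outline gestures at this correctly but without naming the retraction mechanism, which is where all the work hides.

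One genuine error, harmless but worth flagging: you claim $\supp\tilde\zeta_k\subseteq\{1/2<|y|<2\}$. This is false for a general bounded domain---$\delta$ measures distance to $\partial\Omega$, not distance to the origin, so $2^k A_k$ is a neighbourhood of the rescaled boundary $2^k\partial\Omega$ and its diameter grows like $2^k$. What is true, and what you actually use, is the uniform $C^j$ bound on $\tilde\zeta_k$; the Mihlin--H\"ormander/commutator estimate for pointwise multiplication on $L^{s,p}(\R^N)$ needs only that, not compact support in a fixed set. The argument therefore survives, but the geometric picture you stated is wrong.
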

In view of~\eqref{equivalent lototsky norm}, one has~$L^p_N(\Omega)=L^p(\Omega)$---note however that~$L^{s,p}_N(\Omega)$ differs from the unweighted Bessel potential space~$L^{s,p}(\Omega)$ when~$s>0$.

One of the most important outcomes of the analysis carried out in~\cite{lot} is a weighted Calder\'on-Zygmund-type estimate for solutions of a class of degenerate elliptic second-order equations. In Section~\ref{sec:ex}, we will take advantage of a very particular case of this result, which we state here below for the reader's convenience and for further reference---see~\cite{lot}*{Section~5} for its proof.

\begin{proposition}[\cite{lot}*{Lemma 5.2}] \label{lotest}
Let~$p > 1$,~$f \in L^p_\theta(\Omega)$, and~$u \in C^2(\Omega) \cap L^p_\theta(\Omega)$ be a solution of~$- \lapl u = f$ in~$\Omega$. Then,~$u \in L^{2, p}_\theta(\Omega)$ and
\[
\| u \|_{L^{2, p}_\theta(\Omega)} \le C \Big( \| f \|_{L^p_{\theta + 2 p}(\Omega)} + \| u \|_{L^p_\theta(\Omega)} \Big),
\]
for some constant~$C > 0$ depending only on~$N$,~$p$,~$\theta$, and~$\Omega$.
\end{proposition}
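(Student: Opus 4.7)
The plan is to follow the classical localize-rescale-apply-sum strategy, tailored to the definition~\eqref{normforLsptheta} of the Lototsky norm, which itself is an~$\ell^p$ sum of Bessel potential norms of rescaled localized pieces. Since the interior Calder\'on-Zygmund inequality~$\|v\|_{L^{2,p}(\R^N)}\le C(\|\lapl v\|_{L^p(\R^N)}+\|v\|_{L^p(\R^N)})$ holds at the ambient scale, the task reduces to transplanting it to each dyadic scale~$2^{-k}$ via the partition~$\{\zeta_k\}$ and then controlling the commutators that arise.

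For each~$k\in\Z$, set~$\tilde u_k(y):=\zeta_k(2^{-k}y)\,u(2^{-k}y)$, whose support sits in a fixed compact subset of~$\R^N$ (independent of~$k$). Using~$-\lapl u=f$, one computes
\[
-\lapl_y\tilde u_k(y)=4^{-k}\bigg[\zeta_k f-2\,\grad\zeta_k\cdot\grad u-u\,\lapl\zeta_k\bigg](2^{-k}y),
\]
and the classical Calder\'on-Zygmund estimate applied to~$\tilde u_k$ on~$\R^N$ yields
\[
\|\tilde u_k\|_{L^{2,p}(\R^N)}\le C\,\Big(\|\lapl_y\tilde u_k\|_{L^p(\R^N)}+\|\tilde u_k\|_{L^p(\R^N)}\Big).
\]
The three contributions to~$\|\lapl_y\tilde u_k\|_{L^p}$ are estimated via~$|D^j\zeta_k|\le C_j\,2^{jk}$ and a change of variables: the~$f$-piece carries an extra~$2^{-2k}$ factor; the~$u\,\lapl\zeta_k$ piece is bounded by~$C\,\|\tilde u_k\|_{L^p}$ (the~$2^{2k}$ growth of~$\lapl\zeta_k$ cancels the~$4^{-k}$); and the~$\grad\zeta_k\cdot\grad u$ piece is bounded by~$C\,\|\tilde u_k\|_{L^{1,p}}$ (after the~$2^k$ growth of~$\grad\zeta_k$ is absorbed into rescaled differentiation).

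Multiplying by~$2^{-k\theta}$ and summing over~$k\in\Z$, the first two contributions produce precisely~$\|f\|_{L^p_{\theta+2p}(\Omega)}^p$ and~$\|u\|_{L^p_\theta(\Omega)}^p$, directly from the definition of the Lototsky norm (the extra~$2^{-2kp}$ factor from the~$f$-piece combines with the weight~$2^{-k\theta}$ to give~$2^{-k(\theta+2p)}$). For the first-order piece, the interpolation identity~$L^{1,p}_\theta=\big[L^p_\theta,L^{2,p}_\theta\big]_{1/2}$ from Proposition~\ref{lotprops}(v), together with a standard Young inequality, implies
\[
\sum_{k\in\Z} 2^{-k\theta}\|\tilde u_k\|_{L^{1,p}(\R^N)}^p\le\eps\,\|u\|_{L^{2,p}_\theta(\Omega)}^p+C_\eps\,\|u\|_{L^p_\theta(\Omega)}^p,
\]
for any~$\eps>0$. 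Choosing~$\eps$ sufficiently small absorbs the~$L^{2,p}_\theta$ norm on the left-hand side, and the claimed estimate follows; a density argument based on Proposition~\ref{lotprops}(i) extends it from smooth functions to the general class~$u\in C^2(\Omega)\cap L^p_\theta(\Omega)$.

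The main obstacle is the careful scaling bookkeeping, and in particular handling the first-order commutator~$\grad\zeta_k\cdot\grad u$: one must simultaneously track the~$2^k$ growth of~$\grad\zeta_k$, the~$2^{-k}$ coming from differentiating in rescaled variables, and the intermediate order of regularity~$1$. The interpolation step via Proposition~\ref{lotprops}(v), which incorporates the boundary weights into the interpolation scale, is what ultimately closes the estimate.
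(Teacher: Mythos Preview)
The paper does not give its own proof of Proposition~\ref{lotest}: it is quoted from~\cite{lot}*{Lemma~5.2} and the reader is referred to~\cite{lot}*{Section~5}. Your sketch reproduces precisely the localize--rescale--Calder\'on-Zygmund--sum argument that underlies Lototsky's proof, so in substance you and the cited source agree.

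Two small inaccuracies worth cleaning up. First, the supports of the~$\tilde u_k$ do \emph{not} sit in a fixed compact set: after dilation by~$2^k$ the set~$2^k A_k$ has diameter of order~$2^k\diam(\Omega)$. This is harmless (the Calder\'on-Zygmund constant on~$\R^N$ is universal), but the claim should be dropped. Second, the commutator pieces~$u\,\lapl\zeta_k$ and~$\grad\zeta_k\cdot\grad u$ live on~$\supp(D\zeta_k)$, where~$\zeta_k$ itself may vanish; their~$L^p$ norms are therefore controlled by the neighbouring~$\tilde u_{k-1},\tilde u_k,\tilde u_{k+1}$ rather than by~$\tilde u_k$ alone. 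After summing with weights~$2^{-k\theta}$ this costs only a fixed factor, so the conclusion survives.

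The one step that deserves a genuine extra sentence is the absorption: subtracting~$\eps\|u\|_{L^{2,p}_\theta}^p$ requires knowing a priori that this quantity is finite, and the density of~$C^\infty_c(\Omega)$ in~$L^{s,p}_\theta(\Omega)$ does not by itself supply this---approximating~$u$ in~$L^p_\theta$ gives no control on~$-\lapl u_n$. The usual fix is to truncate the sum over~$k$ to a finite range (where everything is finite because~$u\in C^2(\Omega)$), obtain a bound uniform in the truncation, and pass to the limit by monotone convergence. This is routine, but your density remark as written does not quite cover it.
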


One last element of this theory that we will need in the sequel is an estimate on the~$L^{p}_\theta(\Omega)$ norm of the fractional Laplacian of a regular function. Before heading to its statement, we note that, for~$p > 1$, the norm~$\| \cdot \|_{L^{s, p}(\R^N)}$ introduced in~\eqref{bespotnorm} for~$L^{s, p}(\R^N)$ is equivalent to
\[
\opnorm{u}_{L^{s, p}(\R^N)} := \| u \|_{L^p(\R^N)} + \big\| (- \lapl)^{s/2} u \big\|_{L^p(\R^N)}. 
\]
Indeed, for~$s \in \N$ this follows from~\cite{mazya}*{Corollary~10.1.2/1} and~\cite{adams}*{Corollary~4.16 and Theorem~7.63(f)}. In this case,~$L^{s, p}(\R^N)$ coincides with the usual Sobolev space~$W^{s, p}(\R^N)$. When~$s$ is not an integer, the equivalence can be deduced from~\cite{mazya}*{Theorem~10.1.2/4}. 

Thanks to this observation, we can prove the following estimate. We will use it in Sections~\ref{sec:ex} and~\ref{sec:large}, respectively with~$r = p$ and~$r = 1$.

\begin{lemma} \label{DsleH2slem}
Let~$p > 1$,~$r \in [1, p]$,~$\s \in (0, 1)$, and~$\theta > (N / r  + 2 \s) p$. Then, for every~$\varepsilon > 0$, there exists a constant~$C > 0$, depending only on~$N$,~$p$,~$r$,~$\s$,~$\theta$,~$\Omega$, and~$\varepsilon$, such that
\begin{equation} \label{DsleH2s}
\| \Ds u \|_{L^p_\theta(\Omega)} \le C \Big( \| u \|_{L^{2 \s, p}_{\theta - 2 \s p - \varepsilon}(\Omega)} + \| u \|_{L^r(\Omega)} \Big)
\end{equation}
for every function~$u: \R^N \to \R$ that vanishes outside~$\Omega$ and is of class~$C^2$ in~$\Omega$.
\end{lemma}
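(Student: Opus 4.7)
The plan is to split $\Ds u(y)$ pointwise at the scale $\rho := \delta(y)/2$ into a local part $I_{\mathrm{loc}}(y)$ (the principal-value integral on $\{|y-z| < \rho\}$) and a far-field part $I_{\mathrm{far}}(y)$ (the integral on $\{|y-z| \ge \rho\}$). Since $B_\rho(y) \Subset \Omega$ and $u \in C^2(\Omega)$, a second-order Taylor expansion of $u$ around $y$---whose antisymmetric first-order contribution cancels in the principal value---bounds $|I_{\mathrm{loc}}(y)| \le C \delta(y)^{2-2\sigma} \sup_{B_\rho(y)} |D^2 u|$ when $\sigma \in [1/2, 1)$, with the analogous first-order estimate $|I_{\mathrm{loc}}(y)| \le C \delta(y)^{1-2\sigma} \sup_{B_\rho(y)} |\nabla u|$ taking over for $\sigma \in (0, 1/2)$. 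Since $u$ vanishes outside $\Omega$, isolating the $u(y)$ term in $I_{\mathrm{far}}$ yields $|I_{\mathrm{far}}(y)| \le C \delta(y)^{-2\sigma} |u(y)| + J(y)$, where $J(y) := \int_{\{|y-z| \ge \rho,\, z \in \Omega\}} |u(z)| |y-z|^{-N-2\sigma} dz$.

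The naive Hölder estimate $J(y) \le C \delta(y)^{-N/r - 2\sigma} \|u\|_{L^r(\Omega)}$ turns out to be insufficient: integrating its $p$-th power against $\delta^{\theta-N}$ would force the strictly stronger condition $\theta > (N/r + 2\sigma) p + N - 1$. To recover the sharp threshold in the hypothesis, I would split $J = J_{\mathrm{near}} + J_{\mathrm{far}}$ according to whether $|y-z| \le 1$ or $|y-z| > 1$: the ``very far'' piece $J_{\mathrm{far}}$ has bounded kernel and is dominated by $C \|u\|_{L^r(\Omega)}$ via Hölder's inequality on the bounded domain $\Omega$, while for $J_{\mathrm{near}}$ a dyadic decomposition combined with Hölder in each annulus produces the sharper pointwise bound $J_{\mathrm{near}}(y) \le C \delta(y)^{-2\sigma} M_r u(y)$, with $M_r$ denoting the $L^r$--Hardy--Littlewood maximal operator.

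Raising the resulting pointwise estimate to the $p$-th power and integrating against $\delta^{\theta-N}$ then produces four contributions. The one from $J_{\mathrm{far}}$, of the form $\|u\|_{L^r}^p \int_\Omega \delta^{\theta-N} dy$, is finite (since the hypothesis forces $\theta > N - 1$) and gives the $\|u\|_{L^r(\Omega)}$ summand in \eqref{DsleH2s}. The one involving $|u(y)|^p$, weighted by $\delta^{\theta - 2\sigma p - N}$, is absorbed into $\|u\|_{L^{2\sigma, p}_{\theta - 2\sigma p - \varepsilon}}^p$ via the trivial bound $\delta^\varepsilon \le \diam(\Omega)^\varepsilon$ and the continuous embedding $L^{2\sigma, p}_\beta \hookrightarrow L^p_\beta$. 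The local term, handled via Proposition~\ref{lotprops}(iv) in the integer case $2\sigma \in \{1, 2\}$ and by complex interpolation (Proposition~\ref{lotprops}(v)) otherwise, together with a standard covering argument that trades the local $\sup$ for a pointwise evaluation up to a bounded constant, matches the same Lototsky norm. The main technical obstacle is the maximal-function term $\int_\Omega (M_r u)^p \delta^{\theta - 2\sigma p - N} dy$: the hypothesis $\theta > (N/r + 2\sigma) p$ is used precisely to place the weight $\delta^{\theta - 2\sigma p - N}$ in a favourable range, after which a weighted maximal-function inequality combined with the $\varepsilon$-slack in the Bessel potential weight reabsorbs this term into a combination of $\|u\|_{L^{2\sigma, p}_{\theta - 2\sigma p - \varepsilon}}^p$ and $\|u\|_{L^r(\Omega)}^p$.
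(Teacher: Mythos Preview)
Your treatment of the local part $I_{\mathrm{loc}}$ has a genuine gap. The Taylor-based pointwise bound you propose yields control only in terms of \emph{integer}-order derivatives: after the covering step you would obtain, say for $\s\in[1/2,1)$,
\[
\int_\Omega |I_{\mathrm{loc}}|^p\,\delta^{\theta-N}
\;\le\; C\int_\Omega |D^2 u|^p\,\delta^{(2-2\s)p+\theta-N}
\;=\; C\int_\Omega |\delta^2 D^2 u|^p\,\delta^{(\theta-2\s p)-N},
\]
which is a piece of the $L^{2,p}_{\theta-2\s p}$ norm, not of the weaker $L^{2\s,p}_{\theta-2\s p-\varepsilon}$ norm required by \eqref{DsleH2s}. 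Your appeal to ``complex interpolation (Proposition~\ref{lotprops}$.v$)'' does not repair this: that proposition interpolates the \emph{spaces} $L^{s,p}_\theta$, but you have a fixed bound for a fixed $\s$ expressed in integer-order quantities, and there is no pair of endpoint estimates for a single linear operator to interpolate between. (Interpolating in $\s$ via an analytic family would require a completely different setup, and in any case is not what you wrote.) Since the applications in the paper close a bootstrap precisely by trading $L^{2,q}$ control for $L^{2\s,pq}$ control of $\Ds u$ and then interpolating back, a bound only in terms of $\|u\|_{L^{2,p}_\beta}$ would not suffice there.

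The paper avoids this difficulty by working with the partition of unity from the outset: it writes $\Ds u=\sum_j\Ds(\zeta_j u)$ and, for the ``near'' indices $k\ge j-2$, bounds $\|\Ds(\zeta_j u)\|_{L^p(\R^N)}$ directly by $\|\zeta_j u\|_{L^{2\s,p}(\R^N)}$ via the equivalence $\|\cdot\|_{L^{2\s,p}}\sim\|\cdot\|_{L^p}+\|(-\lapl)^\s\cdot\|_{L^p}$ on $\R^N$. Because each $\zeta_j u$ is compactly supported, this gives fractional-order control immediately, and after rescaling one reads off the Lototsky norm $\|u\|_{L^{2\s,p}_{\theta-2\s p-\varepsilon}}$. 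For the ``far'' indices $k\le j-3$ the paper uses Young's convolution inequality with the $L^r$ norm of $\zeta_j u$, which produces the $\|u\|_{L^r(\Omega)}$ term cleanly and is where the threshold $\theta>(N/r+2\s)p$ enters as the summability condition for a geometric series.

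Your far-field strategy via the maximal function $M_r$ is also underspecified: once you reduce to $\int_\Omega (M_r u)^p\,\delta^{\theta-2\s p-N}\,dy$, a weighted maximal inequality would require $\delta^{\theta-2\s p-N}$ to lie in a suitable $A_{p/r}$ class, and this fails for large $\theta$ (the exponent $\theta-2\s p-N$ can be arbitrarily large under your hypothesis). Throwing away the weight via $\delta^{\theta-2\s p-N}\le C$ gives only $\|u\|_{L^p(\Omega)}$, which is neither $\|u\|_{L^r(\Omega)}$ (weaker, since $r\le p$) nor controlled by $\|u\|_{L^p_{\theta-2\s p-\varepsilon}}$ when $\theta-2\s p-\varepsilon>N$.
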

\begin{proof}
By the properties of the~$\zeta_k$'s, the fact that~$u = 0$ in~$\R^N \setminus \Omega$, and a suitable change of variables, we have
\begin{equation} \label{nicest}
\begin{aligned}
\| \Ds u \|_{L^p_\theta(\Omega)} & \le \sum_{j \in \Z} \| \Ds ( \zeta_j u ) \|_{L^p_\theta(\Omega)} \\
& = \sum_{j \in \Z} \bigg( \sum_{k \in \Z} 2^{ - k \theta} \Big\| \zeta_{k}(2^{- k} \cdot) \Ds ( \zeta_j u )(2^{- k} \cdot) \Big\|_{L^p(\R^N)}^p \bigg)^{1/p}
\\
& \le \sum_{j \in \Z} \bigg( \sum_{k \in \Z} 2^{ - (\theta - N) k} \big\| \Ds ( \zeta_j u ) \big\|_{L^p(A_k)}^p \bigg)^{1/p}.
\end{aligned}
\end{equation}

Now, we consider separately the cases~$k \ge j - 2$ and~$k \le j - 3$. 

In the first situation, we estimate
\begin{equation} \label{kgejest}
\begin{aligned}
\sum_{k = j - 2}^{+\infty} 2^{ - (\theta - N) k} \big\| \Ds ( \zeta_j u ) \big\|_{L^p(A_k)}^p & \le 4^{\theta - N} 2^{ - (\theta - N) j} \sum_{k = j - 2}^{+\infty} \big\| \Ds ( \zeta_j u ) \big\|_{L^p(A_k)}^p \\
& \le 3 \cdot 4^{\theta - N} 2^{ - (\theta - N) j} \big\| \Ds ( \zeta_j u ) \big\|_{L^p(\R^N)}^p,
\end{aligned}
\end{equation}
where for the last inequality we used that the~$A_k$'s intersect at most three times. 

On the other hand, take~$k \le j - 3$ and observe that~$|x - y|\ge 2^{- k - 2}$ for every~$x \in A_k$ and~$y \in A_j$. We compute
\begin{multline*}
\big\| \Ds \big( \zeta_j u \big) \big\|_{L^p(A_k)} = \Bigg( \int_{A_k} \bigg| \int_{\R^N} \frac{\zeta_j(y) u(y)}{|x - y|^{N + 2 \s}} \, dy \, \bigg|^p dx \Bigg)^{1/p} \leq \\
\le  \Bigg( \int_{\R^N} \bigg( \int_{\R^{N}} \zeta_j(y) |u(y)| \frac{\chi_{\R^N \setminus B_{2^{- k - 2}}}(x - y)}{|x - y|^{N + 2 \s}} \, dy \, \bigg)^p dx \Bigg)^{1/p}.
\end{multline*}
From this and Young's convolution inequality, we deduce that
\begin{multline*}
\big\| \Ds ( \zeta_j u ) \big\|_{L^p(A_k)} \le 
\| \zeta_j u \|_{L^r(\R^N)} \bigg( \int_{\R^N \setminus B_{2^{- k - 2}}} |z|^{- \frac{(N + 2 \s) p r}{p r - p + r}} \, dz \bigg)^{\frac{p r - p + r}{p r}} \leq \\
\le 2^{\left( \frac{N}{r} - \frac{N}{p} + 2 \s \right) k} C\| \zeta_j u \|_{L^r(\R^N)}.
\end{multline*}
Also, since~$\Omega$ is bounded, there exists~$k_0\in\Z$ such that~$A_k = \varnothing$ for all~$k < k_0$. In light of these facts, we have
\[
\sum_{k = -\infty}^{j - 3} 2^{ - (\theta - N) k} \big\| \Ds ( \zeta_j u ) \big\|_{L^p(A_k)}^p  \le C \| \zeta_j u \|_{L^r(\R^N)}^p \sum_{k = k_0}^{j - 3}  2^{ - k( \theta - \frac{N p}{r} - 2 \s p )} \le C \| u \|_{L^r(A_j)}^p,
\]
where the last inequality holds since~$\theta > (N/r + 2 \s) p$.

By combining the above estimate with~\eqref{kgejest}, changing coordinates, and using the scaling property
\[
\Ds \big[ v(\lambda \cdot) \big] = \lambda^{2 \s} \Ds v(\lambda \cdot) \quad \mbox{for } \lambda > 0, 
\]
we deduce from~\eqref{nicest} that
\begin{multline*}
\big\| \Ds u \big\|_{L^p_\theta(\Omega)} \le C \sum_{j = k_0}^{+\infty} \Big( 2^{ - \frac{\theta - N}{p} j} \big\| \Ds \left( \zeta_j u \right) \big\|_{L^p(\R^N)} + \| u \|_{L^r(A_j)} \Big) \leq \\
\le C \bigg( \sum_{j = k_0}^{+\infty} 2^{ - \frac{\theta - 2 \s p}{p} j} \Big\| \Ds \big[ \zeta_j(2^{- j} \cdot) u(2^{- j} \cdot) \big] \Big\|_{L^p(\R^N)} + \| u \|_{L^r(\Omega)} \bigg).
\end{multline*}
Since, by the definition of~$\opnorm{\, \cdot \,}_{L^{2 \s, p}(\R^N)}$ and its equivalence to~$\| \cdot \|_{L^{2 \s, p}(\R^N)}$,
\begin{align*} 
\Big\| \Ds \big[ \zeta_j(2^{- j} \cdot) u(2^{- j} \cdot) \big] \Big\|_{L^p(\R^N)}\leq 
\opnorm{\zeta_j(2^{- j} \cdot) u(2^{- j} \cdot)}_{L^{2\s,p}(\R^N)}\leq 
C\big\|\zeta_j(2^{- j} \cdot) u(2^{- j} \cdot)\big\|_{L^{2\s,p}(\R^N)},
\end{align*}
we conclude that
\begin{align*}
\big\| \Ds u \big\|_{L^p_\theta(\Omega)} \le C \bigg( \sum_{j = k_0}^{+\infty} 2^{ - \frac{\theta - 2 \s p}{p} j} \big\| \zeta_j(2^{- j} \cdot) u(2^{- j} \cdot) \big\|_{L^{2 \s, p}(\R^N)} + \| u \|_{L^r(\Omega)} \bigg).
\end{align*}

To obtain~\eqref{DsleH2s}, we are left to show that the above series can be controlled by the~$L^{2 \s, p}_{\theta - 2 \s p - \varepsilon}(\Omega)$ norm of~$u$, for every~$\varepsilon > 0$. This is a consequence of a simple inequality for numerical series. Indeed, given any sequence~${(a_j)}_{j\in\N}$ of non-negative numbers, H\"older's inequality gives that
\begin{multline*}
\sum_{j = k_0}^{+\infty} 2^{ - \frac{\theta - 2 \s p}{p} j} a_j  
= \sum_{j = k_0}^{+\infty} \Big( 2^{ - \frac{\theta - 2 \s p - \varepsilon}{p} j} a_j \Big) 2^{- \frac{\varepsilon}{p} j} \\
\le \bigg( \sum_{j = k_0}^{+\infty} \Big( 2^{ - \frac{\theta - 2 \s p - \varepsilon}{p} j} a_j \Big)^p \bigg)^{1/p} \bigg( \sum_{j = k_0}^{+\infty} 2^{- \frac{\varepsilon}{p - 1} j} \bigg)^{(p - 1)/ p} 
\le C \bigg( \sum_{j \in \Z} 2^{ - (\theta - 2 \s p - \varepsilon) j} a_j^p \bigg)^{1/p}.
\end{multline*}
Applying this with~$a_j = \| \zeta_j(2^{- j} \cdot) u(2^{- j} \cdot) \|_{L^{2 \s, p}(\R^N)}$ and recalling~\eqref{normforLsptheta}, we infer that~\eqref{DsleH2s} holds true.
\end{proof}

\section{Barriers}

In this section, we present the construction of positive supersolutions for both the classical and the fractional Laplace operator in bounded domains. Ultimately, they will be used as barriers for the nonlinear operator
\[
u \ \longmapsto\ - \lapl u + \big|\Ds u\big|^{p - 1} \Ds u.
\]

We begin with a simple barrier that will be used for equations involving bounded right-hand sides. In this case, we may restrict ourselves to consider balls as underlying domains, and thus the construction is rather standard. As usual, for~$x, y > 0$ we denote by~$B(x, y) = \int_0^1 t^{x - 1} (1 - t)^{y - 1} \, dt$ the beta function.

\begin{lemma} \label{easybarlem}
For~$R > 0$, the function~$u_\s(x) := (R^2 - |x|^2)_+^\s,\ x\in\R^N$, satisfies
\begin{equation} \label{laplw2s}
- \lapl u_\s(x) = 2 \s \frac{N (R^2 - |x|^2) + 2 (1 - \s) |x|^2}{(R^2 - |x|^2)^{2 - \s}} \ge 2 \s N R^{2 \s - 2},
\qquad\text{for every }x\in B_R,
\end{equation}
and
\begin{equation} \label{laplfracw2s}
\Ds u_\s(x) = \frac{c_{N, \s} B(\s, 1 - \s) \, |\partial B_1|}{2} > 0,
\qquad\text{for every }x\in B_R.
\end{equation}
\end{lemma}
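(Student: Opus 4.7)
For~\eqref{laplw2s} I would proceed by a direct chain-rule computation. Setting~$\phi := R^2 - |x|^2$, which is smooth and strictly positive on~$B_R$, and writing~$u_\s = \phi^\s$ there, I compute
$$
\lapl u_\s = \s (\s - 1) \phi^{\s - 2} |\grad \phi|^2 + \s \phi^{\s - 1} \lapl \phi,
$$
substitute~$|\grad \phi|^2 = 4|x|^2$ and~$\lapl \phi = -2N$, and factor out~$\phi^{\s - 2}$ to obtain the stated identity. The lower bound follows by discarding the non-negative contribution~$2(1-\s)|x|^2$ in the numerator and then using that~$(R^2-|x|^2)^{\s - 1}$ has negative exponent, so its infimum on~$B_R$ is~$R^{2\s - 2}$, attained at~$x = 0$.

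For~\eqref{laplfracw2s}, my plan has two steps. First I would reduce to the unit ball via scaling: setting~$\tilde{u}_\s(y) := (1 - |y|^2)_+^\s$, one has~$u_\s(x) = R^{2\s}\, \tilde{u}_\s(x / R)$, and the elementary identity~$\Ds[f(\cdot/R)](x) = R^{-2\s} (\Ds f)(x/R)$ gives~$\Ds u_\s(x) = (\Ds \tilde{u}_\s)(x/R)$. In particular, the value is independent of~$R$, so it suffices to evaluate~$\Ds \tilde{u}_\s$ on~$B_1$.

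Second, I would compute~$\Ds \tilde{u}_\s(x)$ at an arbitrary~$x \in B_1$ via the symmetric principal-value representation
$$
\Ds \tilde{u}_\s(x) = - \frac{c_{N,\s}}{2} \int_{\R^N} \frac{\tilde{u}_\s(x + z) + \tilde{u}_\s(x - z) - 2 \tilde{u}_\s(x)}{|z|^{N + 2 \s}} \, dz,
$$
passing to polar coordinates~$z = r\omega$ with~$\omega \in \partial B_1$. The key algebraic input is that, for each fixed~$\omega$, the quadratic equations~$|x \pm r\omega|^2 = 1$ have positive roots~$r_\omega^\pm$ satisfying the Vieta-type identity~$r_\omega^+ r_\omega^- = 1 - |x|^2$; writing the integrand along each ray in the factorized form~$\tilde{u}_\s(x+r\omega) = \big((r_\omega^+ - r)(r + r_\omega^-)\big)^\s$ (and the analogous expression for~$\tilde{u}_\s(x-r\omega)$) and choosing a suitable affine change of variable, one recognizes a Beta function whose value is~$B(\s, 1-\s)$, while the~$x$-dependent factors cancel. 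Integrating the trivial~$\omega$ dependence then produces the factor~$|\partial B_1|$, yielding the asserted constant.

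The main obstacle is precisely this last reduction to a Beta integral, a classical Riesz/Getoor-type computation: one must keep careful track of the parts of the ray lying inside and outside~$B_1$, handle the principal value at~$r = 0$ by symmetric truncation, and verify that the~$x$-dependence indeed disappears. Once this computation is carried out, positivity of~\eqref{laplfracw2s} is immediate from positivity of~$c_{N,\s}$,~$B(\s, 1-\s)$, and~$|\partial B_1|$.
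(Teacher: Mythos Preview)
Your proposal is correct and follows the same route as the paper: for~\eqref{laplw2s} the paper simply says ``a straightforward computation,'' which is exactly the chain-rule argument you wrote out; for~\eqref{laplfracw2s} the paper does not compute anything but cites Getoor and Dyda, and the polar-coordinate/Vieta/Beta-integral computation you sketched is precisely the content of those references (in particular Dyda's elementary derivation). Your outline of the second part is accurate---the factorization~$1-|x+r\omega|^2=(r_\omega^+-r)(r+r_\omega^-)$ and the Vieta relation~$r_\omega^+ r_\omega^-=1-|x|^2$ are the correct algebraic inputs---though, as you yourself flag, turning this into an honest proof requires carrying out the change of variables and the principal-value bookkeeping in full, which is a page or two of careful work; the paper sidesteps this by invoking the literature.
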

\begin{proof}
A straightforward computation gives~\eqref{laplw2s}. 
Identity~\eqref{laplfracw2s} is due to~\cite{getoor}---see~\cite{dyda} for an elementary proof and for more general relations.
\end{proof}

In order to deal with right-hand sides that blow up at the boundary, we can no longer limit ourselves to balls, and instead we need to construct barriers tailored to each specific domain. We do this in the next lemma, by considering powers of the so-called torsion function, \textit{i.e.}, of the solution of the Dirichlet problem
\begin{equation} \label{torsprob}
\left \lbrace
\begin{aligned}
- \lapl \tau & = 1 & & \hbox{in } \Omega,\\
\tau & = 0 & & \hbox{on } \partial \Omega.
\end{aligned}
\right.
\end{equation}
The existence and uniqueness of the solution~$\tau$ of the torsion problem~\eqref{torsprob} is classical. Furthermore,~$\tau > 0$ in~$\Omega$ thanks to the strong maximum principle. 

\begin{lemma} \label{lem:valpha}
Let~$\Omega \subseteq \R^N$ be a bounded domain with boundary of class~$C^2$ and~$\tau \in C^\infty(\Omega) \cap C^1(\overline{\Omega})$ be the solution of~\eqref{torsprob}. For~$\alpha \ge 0$, set
\[
v_\alpha := \tau^\alpha \chi_{\overline{\Omega}} \quad \mbox{in } \R^N.
\]
Then, the following statements hold true.
\begin{enumerate}[label=$\roman*)$,leftmargin=3em]
\item There exists a constant~$C_1 \ge 1$, depending only on~$\Omega$, such that
\begin{equation} \label{valpha-bounds}
C_1^{-1} \delta^\alpha \le v_\alpha \le C_1 \delta^{\alpha} \quad \mbox{in } \Omega,
\end{equation}
for all~$\alpha\in(0, 1]$.
\item There exists a constant~$C_2 \ge 1$, depending only on~$N$,~$\Omega$, and~$\s$, such that
\begin{align}
\label{laplvalpha}
C_2^{-1} \alpha (1 - \alpha) \, \delta^{\alpha - 2} \le - \lapl v_\alpha & \le C_2 \, \alpha \, \delta^{\alpha - 2} \quad \mbox{in } \Omega, \\
\label{fraclapvalpha}
- C_2 \, \diam(\Omega)^\alpha \delta^{- 2 \s} \le \Ds v_\alpha & \le C_2 \, \delta^{\alpha-2\s} \quad \hspace{3pt} \mbox{in } \Omega,
\end{align}
for all $\alpha\in (0, 1)$.
\item There exists a constant~$C_3 \ge 1$, depending only on~$N$ and~$\Omega$, such that
\begin{equation} \label{fraclapv0}
C_3^{-1} \delta^{-2\s} \le \Ds v_0 \le C_3 \, \delta^{-2\s} \quad \mbox{in } \Omega.
\end{equation}
\end{enumerate}
\end{lemma}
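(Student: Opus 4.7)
The strategy relies on the classical boundary behavior of the torsion function in a $C^2$ domain, namely that there exists a constant $C \ge 1$ (depending only on $\Omega$) such that $C^{-1} \delta \le \tau \le C \delta$ in $\Omega$. Here the lower bound comes from Hopf's lemma (equivalently, from the uniform interior sphere condition), while the upper bound follows from $\tau \in C^1(\overline{\Omega})$ and $\tau|_{\partial\Omega} = 0$. Claim (i) is an immediate consequence of raising these inequalities to the power $\alpha \in (0, 1]$.

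For the local part of (ii), a direct computation using $-\lapl \tau = 1$ gives
\[
-\lapl(\tau^\alpha) = \alpha\, \tau^{\alpha - 1} + \alpha (1 - \alpha)\, \tau^{\alpha - 2} |\grad \tau|^2.
\]
The upper bound follows by combining $\tau \le C \delta$, $|\grad \tau| \le C$, and $\delta^{\alpha - 1} \le \diam(\Omega)\, \delta^{\alpha - 2}$. For the lower bound, one further uses the Hopf-type estimate $|\grad \tau| \ge c > 0$ in a tubular neighborhood of $\partial \Omega$: there the second term already dominates $c \alpha(1 - \alpha) \delta^{\alpha - 2}$, while away from the boundary $\delta$ is bounded from above and below and the first term suffices.

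For the fractional part of (ii), I would split the integral defining $\Dsi v_\alpha(x)$ at the scale $\delta(x)/2$. On $B_{\delta(x)/2}(x)$ (fully contained in $\Omega$), a second-order Taylor expansion bounds the principal value contribution by $C \sup_{y \in B_{\delta(x)/2}(x)} |D^2 \tau^\alpha(y)| \cdot \delta(x)^{2 - 2 \s}$; since $\tau(y) \sim \delta(x)$ on that ball, one has $|D^2 \tau^\alpha(y)| \le C \delta(x)^{\alpha - 2}$, yielding a term of order $\delta(x)^{\alpha - 2 \s}$. On $\R^N \setminus B_{\delta(x)/2}(x)$, since $v_\alpha \ge 0$ and $\int_{\R^N \setminus B_\rho(x)} |x - y|^{- N - 2 \s} \, dy = C \rho^{- 2 \s}$, we obtain an upper bound $C v_\alpha(x) \delta(x)^{-2 \s} \le C \delta(x)^{\alpha - 2 \s}$ and a lower bound $- C \| v_\alpha \|_\infty \delta(x)^{-2 \s} \ge - C \diam(\Omega)^\alpha \delta(x)^{- 2 \s}$, where for the latter I use part (i) to estimate $\|v_\alpha\|_\infty \le C_1 \diam(\Omega)^\alpha$.

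Part (iii) is more direct: since $v_0 = \chi_{\overline{\Omega}}$,
\[
\Dsi v_0(x) = c_{N, \s} \int_{\R^N \setminus \overline{\Omega}} \frac{dy}{|x - y|^{N + 2 \s}} \quad \text{for } x \in \Omega.
\]
The upper bound follows from $\R^N \setminus \overline{\Omega} \subseteq \R^N \setminus B_{\delta(x)}(x)$. For the lower bound, the uniform exterior sphere condition (guaranteed by $\partial \Omega \in C^2$) provides, for each $x$ sufficiently close to $\partial \Omega$, a ball $B_{c \delta(x)}(y_x) \subseteq \R^N \setminus \Omega$ with $|x - y_x| \le C \delta(x)$; integrating over this ball gives the desired $c\, \delta(x)^{-2 \s}$. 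For $x$ far from $\partial \Omega$, both $\delta(x)$ and the integral are bounded away from $0$ and $\infty$, so the two-sided bound holds trivially.

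I expect the main obstacle to be the lower bound on $\Dsi v_\alpha$ in (ii): contrary to the interior-type local part (which is controlled by $\delta^{\alpha - 2 \s}$), the tail contribution of $v_\alpha$ against far-away points is of order $\|v_\alpha\|_\infty \delta^{-2\s}$, and there is no cancellation to improve this \textemdash{} hence the genuinely weaker and $\diam(\Omega)^\alpha$-dependent lower bound $-C_2 \diam(\Omega)^\alpha \delta^{-2\s}$ appearing in~\eqref{fraclapvalpha}. Careful bookkeeping of which constants depend on $\diam(\Omega)$ vs.\ structural data is the only real subtlety.
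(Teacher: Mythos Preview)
Your outline is correct and follows essentially the same route as the paper: Hopf's lemma plus $C^1(\overline\Omega)$ regularity for~(i) and the local part of~(ii), a near/far splitting of the singular integral at scale comparable to~$\delta(x)$ for the fractional part of~(ii), and the exterior ball for the lower bound in~(iii). The one step you gloss over is the Hessian estimate $|D^2 v_\alpha|\le C\delta^{\alpha-2}$ on the inner ball: the hypothesis $\tau\in C^1(\overline\Omega)$ does not by itself bound $D^2\tau$, so the paper first invokes an interior Schauder estimate ($\|D^2\tau\|_{L^\infty(B_{\delta(x)/4}(x))}\le C\delta(x)^{-1}$, from $-\lapl\tau=1$) and then plugs this into the formula $\partial_{ij}(\tau^\alpha)=\alpha(\alpha-1)\tau^{\alpha-2}\partial_i\tau\,\partial_j\tau+\alpha\tau^{\alpha-1}\partial_{ij}\tau$; with that detail supplied, your argument is complete.
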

\begin{proof}
First, we notice that
\begin{equation} \label{taubounds}
C_1^{-1} \delta \le \tau \le C_1 \delta \quad \mbox{in } \Omega,
\end{equation}
for some constant~$C_1 \ge 1$ depending only on~$\Omega$. The upper bound on~$\tau$ follows from its~$C^1(\overline{\Omega})$ regularity, whereas the lower bound is a consequence of its positivity inside~$\Omega$ and of Hopf's lemma.

Knowing this, we address the validity of the claims made in the statement. Of course,~\eqref{valpha-bounds} follows from~\eqref{taubounds} right away. In order to prove~\eqref{laplvalpha}, a straightforward computation gives that
\begin{equation} \label{vijcomp}
\partial_{i j} v_\alpha = \alpha (\alpha - 1) \tau^{\alpha - 2} \partial_i \tau \partial_j \tau + \alpha \tau^{\alpha - 1} \partial_{i j} \tau \quad \mbox{in } \Omega, 
\end{equation}
for every~$i, j = 1, \ldots, N$, and thus, by~\eqref{torsprob},
\[
- \lapl v_\alpha = \alpha (1 - \alpha) \tau^{\alpha - 2} |\grad \tau|^2 + \alpha \tau^{\alpha - 1} \quad \mbox{in } \Omega.
\]
As~$\tau \in C^1(\overline{\Omega})$, we infer that $- \lapl v_\alpha \le C \alpha \tau^{\alpha - 2}$ in~$\Omega$, for some constant~$C \ge 1$ depending only on~$\Omega$. On the other hand, using again Hopf's lemma, we get that~$|\grad \tau| \ge C^{-1}$ in~$\Gamma_\varepsilon = \{ x \in \Omega : \delta(x) < \varepsilon \}$ for some~$\varepsilon > 0$ small enough. Since we also clearly have that~$\tau \ge C^{-1}$ in~$\Omega \setminus \Gamma_\varepsilon$, we deduce that~$- \lapl v_\alpha \ge C^{-1} \alpha (1- \alpha) \tau^{\alpha - 2}$ in~$\Omega$. These facts combined with~\eqref{taubounds} immediately lead to~\eqref{laplvalpha}.

We now proceed to verify~\eqref{fraclapvalpha}. To this aim, we claim that
\begin{equation} \label{D2valphaest}
\| D^2 v_\alpha \|_{L^\infty(B_{\delta(x)/4}(x))} \le C_\star \delta(x)^{\alpha - 2} \quad \mbox{for every } x \in \Omega,
\end{equation}
for some constant~$C_\star > 0$ depending only on~$N$ and~$\Omega$. This estimate is a simple consequence of the Schauder theory. Indeed, by, \textit{e.g.},~\cite{gt}*{Theorem~4.6},~\eqref{torsprob}, and the upper bound in~\eqref{taubounds}, we have
\[
\| D^2 \tau \|_{L^\infty(B_{\delta(x)/4}(x))} \le C \Big( \delta(x)^{-2} \| \tau \|_{L^\infty(B_{\delta(x)/2}(x))} + \| 1 \|_{L^\infty(B_{\delta(x)/2}(x))} \Big) \le C \delta(x)^{- 1}
\]
for some constant~$C > 0$ depending only on~$N$ and~$\Omega$. Taking advantage of this, the~$C^1(\overline{\Omega})$ regularity of~$\tau$, and~\eqref{vijcomp}, one easily deduces~\eqref{D2valphaest}. To establish~\eqref{fraclapvalpha}, we take a point~$x \in \Omega$ and write
\[
c_{N, \s}^{-1} \Ds v_\alpha(x) = \PV\int_{B_{\delta(x)/4}(x)}\frac{v_\alpha(x)-v_\alpha(y)}{{|x-y|}^{N+2\s}} \, dy+
\int_{\R^N\setminus B_{\delta(x)/4}(x)}\frac{v_\alpha(x)-v_\alpha(y)}{{|x-y|}^{N+2\s}} \, dy.
\]
On the one hand, by~\eqref{D2valphaest}, we have
\begin{multline*}
\bigg| \PV\int_{B_{\delta(x)/4}(x)}\frac{v_\alpha(x)-v_\alpha(y)}{{|x-y|}^{N+2\s}} \, dy \, \bigg| 
= \bigg| \int_{B_{\delta(x)/4}(x)}\frac{v_\alpha(x)-v_\alpha(y)+\grad v_\alpha(x)\cdot(y-x)}{{|x-y|}^{N+2\s}} \, dy \, \bigg| \\
\le  \| D^2 v_\alpha \|_{L^\infty(B_{\delta(x) / 4}(x))} |\partial B_1| \int_0^{\delta(x) / 4} t^{1 - 2 \s} \, dt 
\le C \delta(x)^{\alpha - 2 \s}.
\end{multline*}
On the other hand, using~\eqref{valpha-bounds}, we find that
\[
\int_{\R^N\setminus B_{\delta(x)/4}(x)}\frac{v_\alpha(x)-v_\alpha(y)}{{|x-y|}^{N+2\s}} \, dy \le |\partial B_1| \, v_\alpha(x) \int_{\delta(x) / 4}^{+\infty} \frac{dt}{t^{1 + 2 \s}} \le C \delta(x)^{\alpha - 2 \s}
\]
and
\[
\int_{\R^N\setminus B_{\delta(x)/4}(x)}\frac{v_\alpha(x)-v_\alpha(y)}{{|x-y|}^{N+2\s}} \, dy \ge - C \int_{\Omega \setminus B_{\delta(x)/4}(x)}\frac{\delta(y)^\alpha}{{|x-y|}^{N+2\s}} \, dy \ge - C \diam(\Omega)^\alpha \delta(x)^{- 2 \s}.
\]
Putting together the last four formulas, we arrive at~\eqref{fraclapvalpha}.

We are left to prove~\eqref{fraclapv0}. The right-hand inequality is straightforward, as
\[
c_{N, \s}^{-1} \Ds v_0(x) = \int_{\R^N\setminus\Omega}\frac{dy}{{|x-y|}^{N+2\s}}\le
\int_{\R^N\setminus B_{\delta(x)}(x)}\frac{dy}{{|x - y|}^{N+2\s}} \le C \delta(x)^{- 2\s}
\]
for every~$x \in \Omega$. To check that the left-hand one holds as well, we first observe that it suffices to establish it at points in~$\Gamma_\varepsilon = \{ x \in \Omega : \delta(x) < \varepsilon \}$, for some~$\varepsilon > 0$ arbitrarily small but depending at most on~$N$,~$\Omega$, and~$\s$. Let~$x \in \Gamma_\varepsilon$ and denote by~$z_x \in \partial \Omega$ a point for which~$\delta(x) = |x - z_x|$. By the~$C^2$ regularity of~$\partial \Omega$, there exists an exterior tangent ball~$B_{\delta(x)}(w_x)$ to~$\Omega$ at~$z_x$, provided~$\varepsilon$ is small enough (in dependence of~$\Omega$ only). By virtue of this, we compute
\[
c_{N, \s}^{-1} \Ds v_0(x) = \int_{\R^N\setminus\Omega}\frac{dy}{{|x-y|}^{N+2\s}} \ge \int_{B_{\delta(x)}(w_x)} \frac{dy}{{|x-y|}^{N+2\s}} \ge C^{-1} \delta(x)^{- 2 \s},
\]
and the lower bound in~\eqref{fraclapv0} follows. The proof of the lemma is thus complete.
\end{proof}

A simple application of the barriers constructed in Lemma~\ref{lem:valpha} is the following result, which provides estimates on how fast solutions of the Poisson equation attain their data in the presence of a right-hand side that blows up at the boundary. Half of this result is included in~\cite{gt}*{Theorem~4.9} when the domain is a ball and in \cite{gt}*{Problem~4.6} for the general case.

\begin{lemma} \label{4.9gtinOmegalem}
Let~$\alpha \in (0, 1)$,~$\Omega \subseteq \R^N$ be a bounded domain with boundary of class~$C^2$, and~$f$ be such that~$\delta^{2 - \alpha} f \in L^\infty(\Omega)$. Let~$u \in C^2(\Omega) \cap C^0(\overline{\Omega})$ be a solution of
\begin{equation*} 
\left\lbrace\begin{aligned}
- \lapl u & = f & & \hbox{in } \Omega,\\
u & = 0 & & \hbox{on } \partial \Omega.
\end{aligned}\right.
\end{equation*}
Then,
\begin{equation} \label{udeltaalphaest}
\| \delta^{-\alpha} u \|_{L^\infty(\Omega)} \le C \alpha^{-1} (1 - \alpha)^{-1} \| \delta^{2 - \alpha} f \|_{L^\infty(\Omega)},
\end{equation}
for some constant~$C \ge 1$ depending only on~$\Omega$. Furthermore, if~$f \ge 0$ in~$\Omega$, then
\begin{equation} \label{udeltaalphaestbelow}
\inf_\Omega \big( \delta^{- \alpha} u \big) \ge C^{-1} \alpha^{-1} \inf_\Omega \big( \delta^{2 - \alpha} f \big).
\end{equation}
\end{lemma}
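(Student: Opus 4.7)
The plan is to deploy the barrier~$v_\alpha$ constructed in Lemma~\ref{lem:valpha} together with the classical weak maximum principle for~$-\lapl$. The two-sided bounds~\eqref{valpha-bounds} and~\eqref{laplvalpha} are precisely tailored to produce the exponents~$\alpha$ and~$2-\alpha$ appearing in the statement, so the whole argument amounts to choosing the correct multiplicative constants in front of~$v_\alpha$.

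For the upper bound~\eqref{udeltaalphaest}, set~$M := \| \delta^{2 - \alpha} f \|_{L^\infty(\Omega)}$ and define the competitor
\[
w := C_2 \, \alpha^{-1} (1 - \alpha)^{-1} M \, v_\alpha,
\]
where~$C_2$ is the constant from Lemma~\ref{lem:valpha}. The lower bound in~\eqref{laplvalpha} gives~$- \lapl w \ge M \delta^{\alpha - 2} \ge |f|$ in~$\Omega$, while~$w = 0 = u$ on~$\partial \Omega$. Since~$u, w \in C^2(\Omega) \cap C^0(\overline{\Omega})$, the classical weak maximum principle applied to~$\pm u - w$ yields~$|u| \le w$ in~$\Omega$. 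Combining this with the upper bound in~\eqref{valpha-bounds} produces~\eqref{udeltaalphaest} with constant~$C = C_1 C_2$.

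For the lower bound~\eqref{udeltaalphaestbelow}, set~$m := \inf_\Omega (\delta^{2 - \alpha} f) \ge 0$, so that~$f \ge m \delta^{\alpha - 2}$ in~$\Omega$. Now consider the subsolution candidate
\[
\underline{w} := C_2^{-1} \alpha^{-1} m \, v_\alpha.
\]
The upper bound in~\eqref{laplvalpha} gives~$- \lapl \underline{w} \le m \delta^{\alpha - 2} \le f = - \lapl u$ in~$\Omega$, and~$\underline{w} = 0 = u$ on~$\partial \Omega$. Again by the classical weak maximum principle, applied to~$\underline{w} - u$, we conclude~$u \ge \underline{w}$ in~$\Omega$, and then the lower bound in~\eqref{valpha-bounds} gives~\eqref{udeltaalphaestbelow} with~$C = C_1 C_2$.

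There is no serious obstacle here: Lemma~\ref{lem:valpha} has been designed so that~$v_\alpha$ behaves exactly like~$\delta^\alpha$ up to the boundary while its Laplacian behaves like~$\delta^{\alpha-2}$ with sharp constants in~$\alpha$. The only point that requires a moment's attention is the precise $\alpha$-dependence of the constants: the asymmetry between the $\alpha(1-\alpha)$ in the lower bound for~$-\lapl v_\alpha$ and the~$\alpha$ in its upper bound is exactly what produces the factor~$\alpha^{-1}(1-\alpha)^{-1}$ in~\eqref{udeltaalphaest} and the milder~$\alpha^{-1}$ in~\eqref{udeltaalphaestbelow}.
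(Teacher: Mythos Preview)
Your proof is correct and follows essentially the same approach as the paper: both use the barrier~$v_\alpha$ from Lemma~\ref{lem:valpha}, scale it by the appropriate~$\alpha$-dependent constants coming from~\eqref{laplvalpha}, and apply the classical weak maximum principle to obtain the two-sided bounds via~\eqref{valpha-bounds}. The choice of constants and the overall structure are identical.
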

\begin{proof}
Let~$v_\alpha$ and~$C_2$ be as in Lemma~\ref{lem:valpha}. By the left-hand inequality in~\eqref{laplvalpha}, the function
\[
\overline{v} := \alpha^{-1} (1 - \alpha)^{-1} C_2 \| \delta^{2 - \alpha} f \|_{L^\infty(\Omega)} v_\alpha
\]
satisfies
\[
\left \lbrace
\begin{aligned}
- \lapl \overline{v} & \ge \| \delta^{2 - \alpha} f \|_{L^\infty(\Omega)} \delta^{\alpha - 2} & & \hbox{in } \Omega,\\
\overline{v} & = 0 & & \hbox{on } \partial \Omega.
\end{aligned}
\right.
\]
Thus, by the weak maximum principle (applied with~$\overline{v}$ and~$- \overline{v}$ respectively as super- and subsolution), we have that~$|u| \le \overline{v} = C_2 \alpha^{-1} (1 - \alpha)^{-1} \| \delta^{2 - \alpha} f \|_{L^\infty(\Omega)} v_\alpha$ in~$\Omega$. This and~\eqref{valpha-bounds} give~\eqref{udeltaalphaest}.

When~$f \ge 0$, estimate~\eqref{udeltaalphaestbelow} can be established again via the maximum principle, this time taking advantage of the right-hand inequality in~\eqref{laplvalpha} and using~$\underline{v} := \alpha^{-1} C_2^{-1} \inf_\Omega ( \delta^{2 - \alpha} f ) v_\alpha$ as a subsolution.
\end{proof}

To deal with solutions that blow up at the boundary, we need a different class of barriers. They are provided by the next lemma, which is essentially due to~\cite{chen-felmer}. Following~\cite{chen-felmer}*{Section~3}, we define, for~$\beta \in (-1, 0)$,
\begin{equation} \label{Vbetadef}
V_\beta(x) := \begin{cases}
\eta(x) & \quad \mbox{for } x \in \Omega \setminus \Gamma_{\delta_0},\\
\delta(x)^{\beta} & \quad \mbox{for } x \in \Gamma_{\delta_0},\\
0 & \quad \mbox{for } x \in \R^N \setminus \Omega,
\end{cases}
\end{equation}
where~$\Gamma_t = \{ x \in \Omega : \delta(x) < t \}$, the parameter~$\delta_0 > 0$ is sufficiently small to have that~$\delta \in C^2(\overline{\Gamma_{\delta_0}})$, and~$\eta$ is any positive function for which~$V_\beta$ is of class~$C^2$ in~$\Omega$.

\begin{lemma} \label{Vbetalem}
Let~$\beta \in (-1 + \s, 0)$ and~$\Omega \subseteq \R^N$ be a bounded domain with boundary of class~$C^2$. Then, there exist two constants~$\delta_1 \in (0, \delta_0]$ and~$C_\sharp \ge 1$, depending only on~$N$,~$\Omega$,~$\s$, and~$\beta$, such that
\begin{align}
\label{laplVbeta}
C_\sharp^{-1} \, \delta^{\beta - 2} \le \lapl V_\beta & \le C_\sharp \, \delta^{\beta - 2} \quad \hspace{5pt} \mbox{in } \Gamma_{\delta_1},\\
\label{fraclapVbeta}
C_\sharp^{-1} \, \delta^{\beta - 2 \s} \le \Ds V_\beta & \le C_\sharp \, \delta^{\beta - 2 \s} \quad \mbox{in } \Gamma_{\delta_1}.
\end{align}
\end{lemma}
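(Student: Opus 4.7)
My plan is to prove the two bounds \eqref{laplVbeta} and \eqref{fraclapVbeta} separately, in both cases by direct computation in a sufficiently thin boundary layer.

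\textbf{Bound on $\lapl V_\beta$.} Since $\partial\Omega\in C^2$, I may assume (after possibly shrinking $\delta_0$) that $\delta\in C^2(\overline{\Gamma_{\delta_0}})$ with $|\grad\delta|\equiv 1$ and $\|\lapl\delta\|_{L^\infty(\Gamma_{\delta_0})}$ bounded by a constant depending only on~$\Omega$. A direct computation then gives, in $\Gamma_{\delta_0}$,
$$\lapl V_\beta=\beta(\beta-1)\,\delta^{\beta-2}+\beta\,(\lapl\delta)\,\delta^{\beta-1}.$$
Since $\beta\in(-1,0)$ the coefficient $\beta(\beta-1)$ is strictly positive, while the second summand is of order $\delta^{\beta-1}=\delta\cdot\delta^{\beta-2}$, i.e.\ strictly smaller near the boundary. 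Choosing $\delta_1\in(0,\delta_0]$ small enough that $|\beta(\lapl\delta)|\,\delta\le\beta(\beta-1)/2$ on $\Gamma_{\delta_1}$ gives \eqref{laplVbeta}.

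\textbf{Bound on $\Ds V_\beta$.} For $x\in\Gamma_{\delta_1}$ set $d:=\delta(x)$ and split
$$c_{N,\s}^{-1}\Ds V_\beta(x)=\pv\int_{B_{d/2}(x)}\frac{V_\beta(x)-V_\beta(y)}{|x-y|^{N+2\s}}\,dy+\int_{\R^N\setminus B_{d/2}(x)}\frac{V_\beta(x)-V_\beta(y)}{|x-y|^{N+2\s}}\,dy.$$
The near-field integral will be controlled exactly as in the proof of Lemma~\ref{lem:valpha}: $V_\beta=\delta^\beta$ is $C^2$ on $B_{d/2}(x)$ with $\|D^2 V_\beta\|_{L^\infty(B_{d/2}(x))}\le C\,d^{\beta-2}$, so a second-order Taylor expansion bounds this piece in absolute value by $C\,d^{\beta-2\s}$---of the right order, but with no a priori sign. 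For the far-field integral I would compare $V_\beta$ with its half-space model
$$W(y):=\dist(y,\partial H_x)^\beta\,\chi_{H_x}(y),$$
where $H_x$ denotes the inner tangent half-space at the nearest boundary point $z_x\in\partial\Omega$ (unique for $\delta_1$ small). Scaling gives $\Ds W(x)=\lambda(\beta,\s)\,d^{\beta-2\s}$ for a universal constant $\lambda(\beta,\s)$, while the $C^2$ regularity of $\partial\Omega$ produces the pointwise bound $|V_\beta-W|\le C\,|y-z_x|^2\,\delta^{\beta-1}$ in a neighborhood of $z_x$. Plugging this into the far-field integral yields an error of order $o(d^{\beta-2\s})$.

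\textbf{Main obstacle.} The only delicate point is the positivity of the half-space constant $\lambda(\beta,\s)$---this is exactly what forces the lower restriction $\beta>-1+\s$. At $\beta=\s$ the function $(y_N)_+^\s$ is $\s$-harmonic on the half-space, so $\lambda(\s,\s)=0$; for $\beta\in(-1+\s,\s)$ a one-dimensional Riesz-type computation (carried out in \cite{chen-felmer}*{Section~3}) gives $\lambda(\beta,\s)>0$. With the correct sign in hand, combining the near-field bound, the half-space main term, and the lower-order error produces both sides of \eqref{fraclapVbeta}.
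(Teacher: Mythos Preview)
Your approach coincides with the paper's. For \eqref{laplVbeta} you perform the identical direct computation using $|\grad\delta|=1$. For \eqref{fraclapVbeta} the paper does not argue at all: it simply invokes \cite{chen-felmer}*{Proposition~3.2\,(ii)}, together with the observation that Chen--Felmer's threshold $\tau_0$ equals $-1+\s$ because $x_+^{\s-1}$ is $\s$-harmonic on $(0,\infty)$. Your half-space comparison is precisely a sketch of that Chen--Felmer argument, and you too defer to \cite{chen-felmer} for the sign of $\lambda(\beta,\s)$.

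Two small remarks on the sketch itself. First, the organisation is slightly off: you bound the near-field of $V_\beta$ by $O(d^{\beta-2\s})$ with no sign and then bring in $\Ds W(x)=\lambda\,d^{\beta-2\s}$, but this is the \emph{full} fractional Laplacian of $W$, near-field included. The clean route is the global identity
\[
\Ds V_\beta(x)=\Ds W(x)+c_{N,\s}\int_{\R^N}\frac{W(y)-V_\beta(y)}{|x-y|^{N+2\s}}\,dy
\]
(note $V_\beta(x)=W(x)=d^\beta$), and then estimate the entire integral on the right. Second, your pointwise bound $|V_\beta-W|\le C\,|y-z_x|^2\,\delta^{\beta-1}$ degenerates as $y$ approaches either $\partial\Omega$ or $\partial H_x$, so ``plugging this into the integral'' actually requires a careful zone decomposition---this is exactly the technical work carried out in \cite{chen-felmer}*{Section~3}. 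Also, the zero of $\lambda(\cdot,\s)$ relevant to the \emph{lower} restriction $\beta>-1+\s$ is the one at $\beta=\s-1$ (coming from the $\s$-harmonicity of $x_+^{\s-1}$), not the one at $\beta=\s$ that you mention. None of this is a genuine gap, since both you and the paper ultimately outsource these details to \cite{chen-felmer}.
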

\begin{proof}
The inequalities in~\eqref{laplVbeta} are straightforward. Indeed, a simple computation using that~$|\grad \delta | = 1$ yields
\[
\lapl V_\beta = |\beta| \delta^{\beta - 2} ( 1 - \beta - \delta \lapl \delta ) \quad \mbox{in } \Gamma_{\delta_0}.
\]
Hence,~\eqref{laplVbeta} follows from the~$C^2$ regularity of~$\delta$ in~$\Gamma_{\delta_0}$ and taking~$\delta_1$ suitably small.

On the other hand,~\eqref{fraclapVbeta} is the content of~\cite{chen-felmer}*{Proposition~3.2$\,(ii)$}, once one realizes that the quantity labeled as~$\tau_0(\alpha)$ in~\cite{chen-felmer} is equal to~$-1 + \alpha$. This has already been observed in~\cite{a2}*{Remark~3.1} and is a consequence of the fact that the function~$x_+^{-1 + \alpha}$ is~$\alpha$-harmonic in~$(0, +\infty)$---the function appearing in~\cite{chen-felmer}*{formula~(1.13)} is equal, up to an irrelevant factor, to the~$\alpha$-Laplacian of~$x_+^{\tau}$ evaluated at~$x=1$. The~$\alpha$-harmonicity of~$x_+^{- 1 + \alpha}$ in~$(0, +\infty)$ can be verified in several ways---see,~\textit{e.g.},~\cite{ADFJS}*{Lemma~4.1} for a proof based on the computations of~\cite{dyda}.
\end{proof}

\section{Comparison principles}\label{sec:CP}

In this section, we prove the weak comparison principle of Proposition~\ref{WCPprop} and deduce from it some estimates on the supremum of subsolutions of~\eqref{prob}.

\begin{proof}[Proof of Proposition~\ref{WCPprop}]
Assume first that both inequalities in~\eqref{wover>wunderonboundary} and on the first line of~\eqref{WCPine} are strict, \textit{i.e.}, that~$\underline{w}$ and~$\overline{w}$ satisfy
\begin{equation} \label{WCPineaux}
\left\lbrace\begin{aligned}
- \lapl \underline{w} + \Phi(\, \cdot \,, \Ds \underline{w}) 
&< - \lapl \overline{w} + \Phi(\, \cdot \,, \Ds \overline{w}) & & \mbox{in } \Omega\\
\underline{w} &\le \overline{w} & & \mbox{in } \R^N \setminus \overline{\Omega}\\
\limsup_{\Omega \ni x \rightarrow x_0} \underline{w}(x) &< \liminf_{\Omega \ni x \rightarrow x_0} \overline{w}(x) & & \mbox{for all } x_0 \in \partial \Omega.
\end{aligned}\right.
\end{equation}
Let~$w := \underline{w} - \overline{w}$ and
\[
M := \sup_{\Omega} w.
\]
We claim that
\begin{equation} \label{Mle0claim}
M \le 0.
\end{equation}
Of course, if~\eqref{Mle0claim} is valid, then we are done. Therefore, we argue by contradiction and suppose that~$M > 0$.

By the continuity of~$\underline{w}$ and~$\overline{w}$ inside~$\Omega$ and the strict inequality on the third line of~\eqref{WCPineaux}, there exists a point~$x_M \in \Omega$ at which~$w(x_M) = M$. As~$w \le 0$ outside of~$\overline{\Omega}$, we infer that
\[
w(x_M) = M = \max_{\R^N} w.
\]
Accordingly,
\[
- \lapl w(x_M) \ge 0 \quad \mbox{and} \quad \Ds w(x_M) \ge 0,
\]
that is,
\[
- \lapl \underline{w}(x_M) \ge - \lapl \overline{w}(x_M) \quad \mbox{and} \quad \Ds \underline{w}(x_M) \ge \Ds \overline{w}(x_M).
\]
In view of this and the monotonicity of~$\Phi(x_M, \cdot)$, we obtain that
\[
- \lapl \underline{w}(x_M) + \Phi(x_M, \Ds \underline{w}(x_M)) \ge - \lapl \overline{w}(x_M) + \Phi(x_M, \Ds \overline{w}(x_M)),
\]
in contradiction with the first inequality in~\eqref{WCPineaux}. Thus,~\eqref{Mle0claim} holds true and the lemma is proved when~\eqref{WCPineaux} is in force.

Suppose now that~$\underline{w}$ and~$\overline{w}$ satisfy the weaker hypotheses~\eqref{WCPine} and~\eqref{wover>wunderonboundary}. Let~$R > 0$ be large enough to have~$\overline\Omega  \subseteq B_R$ and consider the function~$u_\s(x) := (R^2 - |x|^2)_+^\s$. By Lemma~\ref{easybarlem}, we know that
\begin{equation} \label{w2sprop}
\Ds u_\s > 0 \quad \mbox{and} \quad - \lapl u_\s \ge 2 \s N R^{2 \s - 2} > 0 \qquad \mbox{in } B_R.
\end{equation}
Consequently, letting~$\overline{w}_\varepsilon := \overline w + \varepsilon u_\s$ for any small~$\varepsilon > 0$ and using again the monotonicity of~$\Phi$ with respect to the second variable, we see that
\[
\left\lbrace\begin{aligned}
- \lapl \underline{w} + \Phi(\, \cdot \,, \Ds \underline{w}) 
&< - \lapl \overline{w}_\varepsilon + \Phi(\, \cdot \,, \Ds \overline{w}_\varepsilon) 
& & \mbox{in } \Omega \\
\underline{w} &\le \overline{w}_\varepsilon 
& & \mbox{in } \R^N \setminus \Omega\\
\limsup_{\Omega \ni x \rightarrow x_0} \underline{w}(x) &< \liminf_{\Omega \ni x \rightarrow x_0} \overline{w}_\varepsilon(x) & & \mbox{for all } x_0 \in \partial \Omega.
\end{aligned}\right.
\]
By what we established before,~$\underline{w} \le \overline{w}_\varepsilon \le \overline{w} + \varepsilon R^{2 \s}$ in the whole~$\R^N$. The conclusion of the lemma now follows by letting~$\varepsilon \downarrow 0$.
\end{proof}

As applications of Proposition~\ref{WCPprop}, we have two results providing upper bounds on the supremum of subsolutions of~\eqref{prob}. Of course, from these one may easily deduce the corresponding lower bounds for supersolutions and two-sided bounds for solutions.

First, we suppose the right-hand side~$f$ in~\eqref{prob} to be a bounded function. In this case, it suffices to apply Proposition~\ref{WCPprop} in conjunction with the barrier of Lemma~\ref{easybarlem}.

\begin{corollary} 
Let~$\Omega \subseteq \R^N$ be a bounded open set and~$\Phi: \Omega \times \R \to \R$ be a Carath\'eodory function with~$\Phi(x, \cdot)$ non-decreasing and~$\Phi(x, 0) \ge 0$ for a.e.~$x \in \Omega$. Let~$f \in L^\infty(\Omega)$,~$g \in L^\infty(\partial \Omega)$, $h \in L^\infty(\R^N \setminus \overline{\Omega})$, and~$\underline{w} \in L^\infty(\R^N) \cap C^2(\Omega) \cap C^0(\overline{\Omega})$ be such that
\begin{equation} \label{underwsub}
\left\lbrace\begin{aligned}
- \lapl \underline{w} + \Phi(\, \cdot \,, \Ds \underline{w}) 
& \le f & & \mbox{in } \Omega \\
\underline{w} &\le g & & \mbox{on } \partial \Omega \\\underline{w} &\le h & & \mbox{in } \R^N \setminus \overline\Omega.
\end{aligned}\right.
\end{equation}
Then,
\[
\sup_{\Omega} \underline{w} \le \sup_{\partial \Omega} g_+ + \sup_{\R^N \setminus \overline{\Omega}} h_+ + C \, \diam(\Omega)^2 \, \sup_{\Omega} f_+,
\]
for some constant~$C > 0$ depending only on~$N$ and~$\s$.
\end{corollary}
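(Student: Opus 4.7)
The plan is to build an explicit supersolution $\overline{w}$ that dominates $\underline{w}$ via Proposition~\ref{WCPprop}, using the radial barrier from Lemma~\ref{easybarlem}. Fix $R := \diam(\Omega)$ and pick any $x_0 \in \Omega$, so that $\overline{\Omega} \subseteq \overline{B_R(x_0)}$. Let $u_\s(x) := (R^2 - |x - x_0|^2)_+^\s$, which is the translate of the function studied in Lemma~\ref{easybarlem}. Define
\[
M := \sup_{\partial \Omega} g_+ + \sup_{\R^N \setminus \overline{\Omega}} h_+, \qquad \overline{w} := M + A \, u_\s,
\]
where the amplitude $A > 0$ is to be chosen below.

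The next step is to check that $\overline{w}$ is a supersolution of the equation on $\Omega$. Since $M$ is constant and $\overline{\Omega} \subseteq \overline{B_R(x_0)}$, Lemma~\ref{easybarlem} yields both $-\Delta \overline{w} = -A \Delta u_\s \ge 2 A \s N R^{2\s - 2}$ on $\Omega$ and $\Ds \overline{w} = A \Ds u_\s \ge 0$ on $\Omega$. Combining these with the monotonicity of $\Phi(x, \cdot)$ and the assumption $\Phi(x, 0) \ge 0$,
\[
-\Delta \overline{w} + \Phi\bigl(\,\cdot\,, \Ds \overline{w}\bigr) \ge 2 A \s N R^{2\s - 2} + \Phi(\,\cdot\,, 0) \ge 2 A \s N R^{2\s - 2} \quad \text{in } \Omega.
\]
Choosing $A := \dfrac{R^{2-2\s}}{2 \s N} \sup_\Omega f_+$ makes the right-hand side at least $\sup_\Omega f_+ \ge f$ pointwise in $\Omega$, so the first inequality in~\eqref{WCPine} is satisfied by the pair $(\underline{w}, \overline{w})$.

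For the outside/boundary conditions, note that $u_\s \ge 0$ everywhere, so $\overline{w} \ge M \ge h_+ \ge h \ge \underline{w}$ on $\R^N \setminus \overline{\Omega}$ by~\eqref{underwsub}, and similarly $\overline{w} \ge M \ge g_+ \ge g \ge \underline{w}$ on $\partial \Omega$. Since $\underline{w} \in C^0(\overline{\Omega})$ and $\overline{w}$ is continuous on $\R^N$, hypothesis~\eqref{wover>wunderonboundary} reduces to the pointwise inequality on $\partial \Omega$, which we just verified. Proposition~\ref{WCPprop} then gives $\underline{w} \le \overline{w}$ throughout $\Omega$, and using $u_\s \le R^{2\s}$ we conclude
\[
\sup_\Omega \underline{w} \le M + A R^{2\s} = \sup_{\partial \Omega} g_+ + \sup_{\R^N \setminus \overline{\Omega}} h_+ + \frac{R^2}{2\s N} \sup_\Omega f_+,
\]
which is the stated estimate with $C = (2\s N)^{-1}$.

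There is no real obstacle here: Lemma~\ref{easybarlem} is essentially engineered for this application. The only subtle point worth emphasizing is why the nonlocal nonlinearity causes no trouble, namely that $\Ds u_\s \ge 0$ in $B_R$ combined with $\Phi(x, 0) \ge 0$ and monotonicity allows one to simply discard the $\Phi$-contribution in the supersolution inequality, leaving the classical Laplacian alone to absorb the source $f$.
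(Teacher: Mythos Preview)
Your proof is correct and follows essentially the same approach as the paper's: both construct the supersolution $\overline{w} = \sup_{\partial\Omega} g_+ + \sup_{\R^N\setminus\overline\Omega} h_+ + \frac{R^{2-2\s}}{2\s N}\sup_\Omega f_+ \cdot u_\s$ with $R=\diam(\Omega)$, discard the $\Phi$-term via $\Ds u_\s\ge 0$ and $\Phi(\cdot,0)\ge 0$, and invoke Proposition~\ref{WCPprop}. One tiny imprecision: you state $\overline\Omega\subseteq\overline{B_R(x_0)}$, but since $x_0\in\Omega$ is an interior point one actually has the stronger inclusion $\overline\Omega\subseteq B_R(x_0)$, which is what is needed for Lemma~\ref{easybarlem} (and $C^2$ regularity of $u_\s$) to apply on all of $\Omega$.
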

\begin{proof}
Write~$R := \diam(\Omega)$ and pick~$x_0 \in \R^N$ in such a way that~$\overline\Omega \subseteq B_R(x_0)$. Without loss of generality, we may assume that~$x_0 = 0$. Similarly to what we did at the end of the proof of Proposition~\ref{WCPprop}, we consider the function~$u_\s(x) := (R^2 - |x|^2)^\s_+$, which satisfies~\eqref{w2sprop}. Hence, the function
\[
\overline{w}(x) := \sup_{\partial \Omega} g_+ + \sup_{\R^N \setminus \overline{\Omega}} h_+ + \frac{R^{2(1 - \s)} \sup_{\Omega} f_+}{2 N \s} \, u_\s(x),
\qquad x\in\R^N,
\]
is such that
\begin{equation} \label{wbarsuper}
\left\lbrace\begin{aligned}
- \lapl \overline{w} + \Phi(\, \cdot \,, \Ds \overline{w}) & \ge f & & \mbox{in } \Omega \\
\overline{w} & \ge g & & \mbox{on } \partial \Omega \\
\overline{w} & \ge h & & \mbox{in } \R^N \setminus \Omega.
\end{aligned} \right.
\end{equation}
The conclusion now follows by Proposition~\ref{WCPprop}.
\end{proof}

By combining Proposition~\ref{WCPprop} with Lemma~\ref{lem:valpha}, we may tackle the case when~$f$ is merely in~$L^\infty_{\loc}(\Omega)$ and blows up at the boundary of~$\Omega$ at a strictly slower rate than the square of the inverse distance function.

\begin{corollary} \label{WCPcor2}
Let~$\Omega \subseteq \R^N$ be a bounded open set with boundary of class~$C^2$ and~$\Phi: \Omega \times \R \to \R$ be a Carath\'eodory function with~$\Phi(x, \cdot)$ non-decreasing and~$\Phi(x, 0) \ge 0$ for a.e.~$x \in \Omega$. Let~$f$ be such that~$\delta^{2-\alpha}f\in L^\infty(\Omega)$ for some~$0 < \alpha \le \overline{\alpha} < 1$,~$g \in L^\infty(\partial \Omega)$, and~$h \in L^\infty(\R^N \setminus \overline{\Omega})$. Let~$\underline{w} \in L^\infty(\R^N) \cap C^2(\Omega) \cap C^0(\overline{\Omega})$ be such that~\eqref{underwsub} holds true. Then,
\begin{align}\label{45455}
\sup_{\Omega} \underline{w} \leq \sup_{\partial \Omega} g_+ + \sup_{\R^N \setminus \overline{\Omega}} h_+ + C\alpha^{-1} \sup_\Omega \big( \delta^{2-\alpha}f_+ \big)
\end{align}
for some constant~$C > 0$ depending only on~$N$,~$\Omega$,~$\s$, and~$\overline{\alpha}$.
\end{corollary}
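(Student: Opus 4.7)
The plan is to prove Corollary~\ref{WCPcor2} by constructing an explicit global supersolution~$\overline{w}$ to the nonlinear inequality and then invoking the weak comparison principle of Proposition~\ref{WCPprop}. The main obstacle, compared with the analogous purely local estimate, is that the only information on the nonlinearity is that $\Phi(x, \cdot)$ is non-decreasing with $\Phi(x, 0) \ge 0$; as a consequence, $\Phi(\cdot, \Ds \overline{w})$ can be arbitrarily negative wherever $\Ds \overline{w} < 0$. The naive candidate $\overline{w} = \mathrm{const} + K v_\alpha$ fails precisely at this point, since the lower bound in~\eqref{fraclapvalpha} only gives $\Ds v_\alpha \ge -C_2 \diam(\Omega)^\alpha \delta^{-2\s}$, which is unbounded below as $x \to \partial\Omega$. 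The heart of the argument is therefore to engineer $\overline{w}$ so that $\Ds \overline{w} \ge 0$ pointwise in $\Omega$; monotonicity will then collapse the full nonlinear supersolution inequality to the purely classical one, $-\lapl \overline{w} \ge f$.

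Concretely, I would use the ansatz
\[
\overline{w} := M_2 + M_1 v_0 + K v_\alpha \quad \text{in } \R^N,
\]
with $v_0 = \chi_{\overline\Omega}$ and $v_\alpha$ from Lemma~\ref{lem:valpha}, and with constants
\[
K := C_2 \alpha^{-1}(1 - \alpha)^{-1} \sup_\Omega\!\big(\delta^{2 - \alpha} f_+\big), \quad M_2 := \sup_{\R^N \setminus \overline\Omega} h_+, \quad M_1 := \sup_{\partial\Omega} g_+ + C_2 C_3 \diam(\Omega)^\alpha K.
\]
The role of $K v_\alpha$ is classical: by the left-hand inequality in~\eqref{laplvalpha} (and since $v_0 \equiv 1$ in $\Omega$ gives no Laplacian contribution), $-\lapl \overline{w} = - K \lapl v_\alpha \ge \sup_\Omega(\delta^{2-\alpha} f_+)\,\delta^{\alpha-2} \ge f_+ \ge f$ in $\Omega$. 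The role of $M_1 v_0$ is to offset the possibly negative part of $\Ds v_\alpha$ via the left-hand inequality in~\eqref{fraclapv0}, which yields
\[
\Ds \overline{w} = M_1 \Ds v_0 + K \Ds v_\alpha \ge \bigl( M_1 C_3^{-1} - K C_2 \diam(\Omega)^\alpha \bigr) \delta^{-2\s} \ge 0,
\]
the last inequality holding by the choice of $M_1$. Monotonicity of $\Phi(x,\cdot)$ and $\Phi(x,0)\ge 0$ then give $\Phi(\cdot, \Ds \overline{w}) \ge 0$, whence the full supersolution condition $-\lapl \overline{w} + \Phi(\cdot, \Ds \overline{w}) \ge f$ in $\Omega$.

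The remaining hypotheses of Proposition~\ref{WCPprop} are verified directly: outside $\overline\Omega$ one has $\overline{w} \equiv M_2 \ge h$, while, since $v_\alpha$ vanishes on $\partial\Omega$,
\[
\liminf_{\Omega \ni x \to x_0} \overline{w}(x) = M_1 + M_2 \ge \sup_{\partial\Omega} g_+ + \sup_{\R^N \setminus \overline\Omega} h_+ \ge g(x_0) \ge \underline{w}(x_0)
\]
for every $x_0 \in \partial\Omega$, in accordance with~\eqref{wover>wunderonboundary}. Therefore $\underline{w} \le \overline{w}$ in $\Omega$, and the bound~\eqref{45455} follows by estimating $\sup_\Omega \overline{w} \le M_1 + M_2 + C_1 \diam(\Omega)^\alpha K$ via the upper bound in~\eqref{valpha-bounds}; the factors $(1-\alpha)^{-1}$ and $\diam(\Omega)^\alpha$ present in the $K$-terms can then be absorbed into a single constant depending only on $N$, $\Omega$, $\s$, and $\overline\alpha$, thanks to the standing restriction $\alpha \le \overline\alpha < 1$.
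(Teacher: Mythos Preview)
Your proof is correct and follows essentially the same approach as the paper's: both build the supersolution as a combination of $v_0$ and $v_\alpha$, tuning the coefficient of $v_0$ so that $\Ds\overline{w}\ge 0$ pointwise in~$\Omega$ (which reduces the nonlinear supersolution inequality to the classical one $-\lapl\overline{w}\ge f$), and then apply Proposition~\ref{WCPprop}. The only cosmetic difference is the packaging of the constants---the paper writes $\overline{w}=\alpha^{-1}\sup_\Omega(\delta^{2-\alpha}f_+)\bigl(Mv_0+C_2(1-\alpha)^{-1}v_\alpha\bigr)+\sup_{\partial\Omega}g_+ + \sup_{\R^N\setminus\overline\Omega}h_+$ and chooses $M$ large, whereas you split the additive constant between a global $M_2$ and the $v_0$-coefficient $M_1$---but the barrier and the logic are the same.
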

\begin{proof}
For~$M > 0$, define the non-negative function
\[
w(x):=\alpha^{-1} \sup_{\Omega} \big( \delta^{2-\alpha}f_+ \big) \big( M v_0(x) + C_2 (1 - \alpha)^{-1} v_\alpha(x) \big),
\qquad x \in \R^N,
\]
where~$v_0$,~$v_\alpha$, and~$C_2$ are as in Lemma~\ref{lem:valpha}. By estimates~\eqref{laplvalpha}-\eqref{fraclapv0}, in~$\Omega$ we then have
\begin{align*}
-\lapl{w} & \ge \sup_{\Omega} \big( \delta^{2-\alpha}f_+ \big) \delta^{\alpha-2}, \\
\Ds{w} & \ge \alpha^{-1} \sup_{\Omega} \big( \delta^{2-\alpha}f_+ \big)
\big( C_3^{-1} M -C_2^2 (1 - \alpha)^{-1} \, \diam(\Omega)^\alpha \big) \delta^{-2\s} \ge 0,
\end{align*}
provided~$M$ is large enough, in dependence of~$N$,~$\Omega$,~$\s$, and~$\overline{\alpha}$ only. Thus,
\[
- \lapl w +\Phi( \,\cdot\,, \Ds w) \ge f
\qquad\text{in }\Omega.
\] 
This yields that~$\overline{w}:=w + \sup_{\partial \Omega} g_+ + \sup_{\R^N \setminus \overline{\Omega}} h_+$ satisfies~\eqref{wbarsuper}. From Proposition~\ref{WCPprop} it follows that~$\underline{w}\leq\overline{w}$. Estimate~\eqref{45455} is then a consequence of Lemma~\ref{lem:valpha}.
\end{proof}

\section{Existence. Proof of Theorem~\ref{thm:ex}} \label{sec:ex}

We present here the proof of Theorem~\ref{thm:ex} under the notational conventions explained in Paragraph~\ref{notations}.
For readability purposes, we split the proof into four intermediate steps:
\begin{enumerate}[label=Step \arabic*),leftmargin=5em]
	\item First, we reduce~\eqref{prob} to an equivalent problem having vanishing boundary and exterior data.
	\item The so-obtained Dirichlet problem will contain singular terms originating from the lack of smoothness of the fractional Laplacian at the boundary 
and we circumvent this issue by solving
a family of regularized problems by cutting the singularities off.
	\item We then obtain uniform estimates on the solutions to these regularized problems; to get stronger estimates, we use the family of weighted Sobolev spaces introduced in Section~\ref{sec:lototsky}, wherein all necessary notation can be found.
	\item Finally, the estimates enable us to conclude that the solutions to the regularized problems accumulate at a solution of the original one. Its uniqueness then immediately follows from Proposition~\ref{WCPprop}.
\end{enumerate}

\subsection{Reduction to homogeneous data}

Let~$\bar{g}$ be the harmonic extension of~$g$ inside~$\Omega$, \textit{i.e.},~$\bar{g} \in C^2(\Omega) \cap C^0(\overline{\Omega})$ is the unique solution of the Dirichlet problem
\[
\left\lbrace\begin{aligned}
- \lapl \bar{g} & = 0 & & \hbox{in } \Omega \\
\bar{g} & = g & & \hbox{on } \partial \Omega.
\end{aligned}\right.
\]
We define
\[
\psi := \chi_{\overline{\Omega}} \bar{g} + \chi_{\R^N \setminus \overline{\Omega}} h = 
\left\lbrace\begin{aligned}
\bar{g} &&& \mbox{in } \overline{\Omega} \\
h &&& \mbox{in } \R^N \setminus \overline{\Omega}.
\end{aligned} \right.
\]
Notice that~$\psi \in L^\infty(\R^N) \cap C^2(\Omega) \cap C^0(\overline{\Omega})$.

Letting~$v := u - \psi$, it is clear that~\eqref{prob} is equivalent to the problem\footnote{Let us stress here that~$\R^N\setminus\Omega=\partial\Omega\cup(\R^N\setminus\overline\Omega)$.}
\begin{equation}\label{probv}
\left\lbrace\begin{aligned}
-\lapl v+ P[v] &= f & & \hbox{in }\Omega \\
v &= 0 & & \hbox{in }\R^N\setminus\Omega,
\end{aligned}\right.
\end{equation}
with
\begin{align*}
P[v] := \Big\{ \big|\Ds v + \Ds \psi \big|^{p-1} \big(\Ds v + \Ds \psi \big) \Big\} \Big|_\Omega.
\end{align*}
Observe that both~$\Ds \psi$ and~$f$ are locally bounded and H\"older continuous functions in~$\Omega$. However, they may in general blow up at the boundary of~$\Omega$. Indeed, we have that
\begin{equation} \label{pointDspsibound}
|\Ds \psi(x)| \le C \Big( \| g \|_{L^\infty(\partial \Omega)} + \| h \|_{L^\infty(\R^N \setminus \Omega)} \Big) \delta(x)^{- 2 \s} \qquad \mbox{for all } x \in \Omega.
\end{equation}
To see this, on the one hand, by the maximum principle and the classical Schauder theory (\textit{e.g.},~\cite{gt}*{Theorem~4.6}), one gets that
\[
\| \bar{g} \|_{L^\infty(\Omega)} + \delta(x)^2 \| D^2 \bar{g} \|_{L^\infty(B_{\delta(x)/4}(x))} \le C \| g \|_{L^\infty(\partial \Omega)} \qquad \mbox{for all } x \in \Omega.
\]
Consequently, arguing as we did to get estimate~\eqref{fraclapvalpha} in Lemma~\ref{lem:valpha}, we find that
\begin{equation*}
|\Ds (\chi_{\overline{\Omega}} \bar{g})(x)| \le C \| g \|_{L^\infty(\partial \Omega)} \delta(x)^{- 2 \s},
\end{equation*}
for all~$x \in \Omega$. On the other hand, by computing directly,
\begin{equation*} 
|\Ds (\chi_{\R^N \setminus \overline{\Omega}} h)(x)| \le c_{N, \s} \int_{\R^N\setminus B_{\delta(x)}(x)} \frac{\chi_{\R^N \setminus \overline{\Omega}}(y) |h(y)|}{|x - y|^{N + 2 \s}} \, dy \le C \| h \|_{L^\infty(\R^N \setminus \overline{\Omega})} \delta(x)^{- 2 \s},
\end{equation*}
for all~$x \in \Omega$. The combination of the last two estimates leads to~\eqref{pointDspsibound}.

In light of this diverging behaviours, to solve~\eqref{probv} it is convenient to consider a family of suitably regularized problems. This will be the content of the next subsection.

\subsection{Approximating problems}

For any large integer~$j$, consider the open set
\[
\Omega_j := \big\{ x \in \Omega : \delta(x) > 2^{-j} \big\}.
\]
Then, let~$\eta_j \in C^\infty_c(\R^N)$ be a cut-off function satisfying~$0 \le \eta_j \le 1$ in~$\R^N$,~$\supp(\eta_j) \subseteq \Omega_{j}$,~$\eta_j \equiv 1$ in~$\Omega_{j - 1}$, and~$|\grad \eta_j| \le C_j$ in~$\R^N$. We take into account the auxiliary problem
\begin{equation}\label{probvj}
\left\lbrace\begin{aligned}
-\lapl v+ P_j[v] &= \eta_jf & & \hbox{in }\Omega \\
v &= 0 & & \hbox{in }\R^N\setminus\Omega,
\end{aligned}\right.
\end{equation}
with~$P_j[v] := \eta_j P[v]$. To find a solution of~\eqref{probvj}, we will look at it as a fixed-point problem.

Let~$\beta \in (2 \s, 2) \setminus \{ 1 \}$ to be chosen later, in dependence of~$\s$ and~$p$ only, and consider the Banach space
\[
\X := \Big\{ w \in C^0(\R^N) \cap C^\beta(\overline{\Omega}) : w = 0 \mbox{ in } \R^N \setminus \Omega \Big\},
\]
endowed with the norm~$\| w \|_{\X} := \| w \|_{C^\beta(\overline{\Omega})}$.

First, we claim that~$P_j: \X \to L^\infty(\Omega)$ is a continuous mapping and that
\begin{equation} \label{Pjbound}
\| P_j[w] \|_{L^\infty(\Omega)} \le \overline{C}_j \big( 1 + \| w \|_\X^p \big) \quad \mbox{for every } w \in \X.
\end{equation}
The continuity easily follows from the fact that~$\supp(\eta_j) \subseteq \Omega_j$ and the estimate
\begin{multline*}
\big|\Ds w(x)\big| = \frac{c_{N,\s}}{2} \bigg| \int_{\R^N} \frac{2 w(x) - w(x + z) - w(x - z)}{|z|^{N + 2 \s}} \, dz \, \bigg| \leq \\
\le C \bigg( \int_{B_{2^{- j - 1}}} \frac{ [w]_{C^\beta(B_{2^{- j - 1}}(x))}}{|z|^{N + 2 \s - \beta}} \, dz + \int_{\R^N \setminus B_{2^{- j - 1}}} \frac{ \| w \|_{L^\infty(\R^N \setminus B_{2^{- j - 1}}(x))}}{|z|^{N + 2 \s}} \, dz \bigg) 
\le C_j \| w \|_{\X},
\end{multline*}
which holds true for every~$w \in \X$ and~$x \in \Omega_j$. From this and~\eqref{pointDspsibound}, we also infer that
\begin{multline*}
\big\| P_j[w] \big\|_{L^\infty(\Omega)} 
\le C \Big( \| \Ds w \|_{L^\infty(\Omega_j)}^p + \| \Ds \psi \|_{L^\infty(\Omega_j)}^p \Big) \leq \\
\le C_j \Big( \| w \|_{\X}^p + \| g \|_{L^\infty(\partial \Omega)}^p + \| h \|_{L^\infty(\R^N \setminus \Omega)}^p \Big),
\end{multline*}
which gives~\eqref{Pjbound}.

Denote now by~$(- \lapl)^{-1}$ the inverse of the Dirichlet Laplacian in~$\Omega$, \textit{i.e.}, let~$(-\lapl)^{-1} F$ be the only solution~$\phi$ of the problem
\[
\left\lbrace\begin{aligned}
-\lapl \phi &= F & & \hbox{in }\Omega \\
\phi &= 0 & & \hbox{on }\partial \Omega.
\end{aligned}\right.
\]
By the classical Calder\'on-Zygmund theory and the Sobolev embedding, the operator~$(-\lapl)^{-1}$ maps~$L^\infty(\Omega)$ into~$W^{1, q}_0(\Omega) \cap W^{2, q}(\Omega) \cap C^{\gamma}(\overline\Omega)$, for every~$q \in (1, +\infty)$ and~$\gamma \in (0, 2)$. Also,
\begin{equation} \label{-lapl-1cont}
\| (-\lapl)^{-1} F \|_{C^{\gamma}(\overline\Omega)} \le C_{\gamma} \| F \|_{L^\infty(\Omega)}.
\end{equation}

Pick now any~$q \in [1, +\infty)$ and~$\gamma \in (\beta, 2)$. Then, the standard inclusion~$\iota: C^{\gamma}(\overline\Omega) \to C^\beta(\overline\Omega)$ is compact. Hence, the mapping~$T_j: \X \to \X$ defined by
\[
T_j[w] := 
\begin{cases}
\iota \big( (-\lapl)^{-1} ( \eta_jf - P_j[w] ) \big) & \hbox{in } \Omega \\
0 & \hbox{in } \R^N \setminus \Omega,
\end{cases}
\]
for all~$w \in \X$, is also compact. We stress that~$T_j[w]$ defines a continuous function in~$\R^N$---and is thus an element of~$\X$---since its restriction to~$\Omega$ belongs to~$W_0^{1, q}(\Omega) \cap C^\beta(\overline\Omega)$.

Notice then that~$v \in \X \cap W^{2, q}(\Omega)$ is a solution of~\eqref{probvj} if and only if it is a fixed point of the map~$T_j$. Since~$T_j$ is compact, we can show the existence of a fixed point using the Leray-Schauder Theorem (see, \textit{e.g.},~\cite{gt}*{Theorem~11.3}), provided we check that
\begin{equation} \label{LSunivbound}
\| v \|_\X \le \overline{C}_j \quad \mbox{for every } v \in \X \mbox{ such that } v = \lambda T_j[v] \mbox{ for some } \lambda \in [0, 1].
\end{equation}

To see this, note that if~$v \in \X$ satisfies~$v = \lambda T_j[v]$, then~$v$ is a~$C^0(\R^N) \cap W^{2, q}(\Omega)$ solution of
\[
\left\lbrace\begin{aligned}
-\lapl v + \lambda P_j[v] &= \lambda \eta_jf & & \hbox{in }\Omega, \\
v &= 0 & & \hbox{in }\R^N\setminus\Omega.
\end{aligned}\right.
\]
Then, by standard elliptic regularity,~$v$ is actually of class~$C^2$ in~$\Omega$, and therefore the function~$u := v + \psi$ is a~$L^\infty(\R^N) \cap C^2(\Omega) \cap C^0(\overline{\Omega})$ solution of
\[
\left\lbrace\begin{aligned}
-\lapl u + \lambda \eta_j \big|\Ds u\big|^{p-1} \Ds u  &= \lambda\eta_j f & & \hbox{in }\Omega \\
u &= g & & \hbox{on } \partial \Omega \\
u &= h & & \hbox{in }\R^N\setminus\Omega.
\end{aligned}\right.
\]
Hence, by the comparison principle of Corollary~\ref{WCPcor2}, we infer that~$\| u \|_{L^\infty(\Omega)}$ is universally bounded, and thus
\begin{equation} \label{vunivbound}
\| v \|_{L^\infty(\Omega)} \le \overline{C}.
\end{equation}
Knowing this, we may proceed to show the validity of~\eqref{LSunivbound}. First, we remark that
\[
\| \varphi \|_{C^{\alpha_1}(\overline\Omega)} \le C \| \varphi \|_{L^\infty(\Omega)}^{1 - \alpha_1 / \alpha_2} \| \varphi \|_{C^{\alpha_2}(\overline\Omega)}^{\alpha_1 / \alpha_2} \quad \mbox{for all } \varphi \in C^{\alpha_2}(\overline\Omega) \mbox{ and } 0 < \alpha_1 < \alpha_2 < 2.
\]
See, \textit{e.g.},~\cite{lunardi_analsemi}*{Proposition~1.1.3$\,(iii)$}. Thanks to this,~\eqref{vunivbound},~\eqref{-lapl-1cont}, and~\eqref{Pjbound}, we compute
\begin{align*}
\| v \|_\X & = \| v \|_{C^\beta(\overline\Omega)} \le C \| v \|_{L^\infty(\Omega)}^{1 - \beta / \gamma} \| v \|_{C^\gamma(\overline\Omega)}^{\beta / \gamma} 
\le \overline{C} \lambda^{\beta / \gamma} \| T_j[v] \|_{C^\gamma(\overline\Omega)}^{\beta / \gamma} \\
& \le \overline{C}_\gamma \| \eta_jf - P_j[v] \|_{L^\infty(\Omega)}^{\beta / \gamma} 
\le \overline{C}_\gamma \Big( \| \eta_j f \|_{L^\infty(\Omega)}^{\beta / \gamma} + \| P_j[v] \|_{L^\infty(\Omega)}^{\beta / \gamma} \Big) 
\le \overline{C}_{\gamma, j} \Big( 1 + \| v \|_\X^{\beta p/ \gamma}  \Big).
\end{align*}
Notice that, since~$\s p < 1$, we can choose~$\beta \in (2 \s, 2) \setminus \{ 1 \}$ (close to~$2 \s$) and~$\gamma \in (\beta, 2)$ (close to~$2$) in a way that~$\beta p / \gamma < 1$. By doing this and applying the weighted Young's inequality, claim~\eqref{LSunivbound} easily follows from the above estimate.

Accordingly, we can apply the Leray-Schauder Theorem and conclude that there exists a fixed point~$v_j \in \X$ for the map~$T_j$, \textit{i.e.}, a solution~$v_j \in C^0(\R^N) \cap C^2(\Omega)$ of problem~\eqref{probvj}. Also,
\begin{equation} \label{vjunivbound}
\| v_j \|_{L^\infty(\Omega)} \le \overline{C},
\end{equation}
as a consequence of~\eqref{vunivbound}.

\subsection{Uniform estimates} \label{subsub:uni}

We now want to let~$j \uparrow \infty$ and show that the limit of the~$v_j$'s is a solution of~\eqref{probv}. In order to do this, we need estimates for~$v_j$ that do not depend on~$j$. Note that we already know that each~$v_j$ satisfies the uniform~$L^\infty$ bound~\eqref{vjunivbound}.

Let~$q \in (1, +\infty)$. As~$v_j$ is a solution of~\eqref{probvj}, by~Proposition~\ref{lotest}, Lemma~\ref{DsleH2slem} 
(applied here with~$p q$ in place of~$p$,~$r = pq$,~$\theta = N + 2 q > N + 2 \s p q$, and~$\varepsilon = 2 (1 - \s p) q > 0$), and estimate~\eqref{vjunivbound}, we have
\begin{equation} \label{123132132132}
\begin{aligned}
\| v_j \|_{L^{2, q}_N(\Omega)} 
& \le C_q \Big( \| \eta_j f - P_j[v_j] \|_{L^q_{N + 2 q}(\Omega)} + \| v_j \|_{L^q_N(\Omega)} \Big) \\
& \le C_q \Big( \| \Ds v_j \|_{L^{p q}_{N + 2 q}(\Omega)}^p + \| \Ds \psi \|_{L^{p q}_{N + 2 q}(\Omega)}^p + \| f \|_{L^q_{N + 2 q}(\Omega)} + \| v_j \|_{L^q(\Omega)} \Big) \\
& \le \overline{C}_q \Big( 1 + \| v_j \|_{L^{2 \s, p q}_{N}(\Omega)}^p \Big).
\end{aligned}
\end{equation}
Notice that, to get the last inequality, we also took advantage of the fact that, thanks to~\eqref{pointDspsibound} and~\eqref{hypo-data},
\begin{multline*}
\| \Ds \psi \|_{L^{p q}_{N + 2 q}(\Omega)}^{p q} + \| f \|_{L^q_{N + 2 q}(\Omega)}^q \le C_q \int_{\Omega} \Big( \big|\Ds \psi(x)\big|^{p q} + |f(x)|^q \Big) \delta(x)^{2 q} \, dx \\
\le \overline{C}_q \int_\Omega \Big( \delta(x)^{2 q (1 - \s p)} + \delta(x)^{\alpha q} \Big) \, dx \le \overline{C}_q.
\end{multline*}
The interpolation inequality of, say,~\cite{lunardi}*{Corollary~2.1.8} along with the representation of Proposition~\ref{lotprops}$.v$ for the space~$L^{2 \s, p q}_N(\Omega)$ and again~\eqref{vjunivbound} then give that
\[
\| v_j \|_{L^{2 \s, p q}_N(\Omega)} \le C_q \| v_j \|_{L^{\frac{(1 - \s) p q}{1 - \s p}}_N(\Omega)}^{1 - \s} \| v_j \|_{L^{2, q}_N(\Omega)}^\s \le \overline{C}_q \| v_j \|_{L^{2, q}_N(\Omega)}^\s.
\]
By plugging this into~\eqref{123132132132} and taking advantage of the weighted Young's inequality, we conclude that
\begin{equation} \label{vjunifH2qest}
\| v_j \|_{L^{2, q}_N(\Omega)} \le \overline{C}_q \quad \mbox{for every } j\in\N.
\end{equation}
Note that, once again, we used in a crucial way that~$\s p < 1$.

Next, we claim that, for any small~$\varepsilon \in (0, 1)$,
\begin{equation} \label{vjunifdecay}
|v_j(x)| \le \overline{C}_\varepsilon \, \delta(x)^{\min \{ 1 - \varepsilon, 2 (1 - \s p) - \varepsilon, \alpha \}} \quad \mbox{for every } x \in \Omega \mbox{ and every } j\in\N.
\end{equation}
To check this, let~$q > N$ and notice that, using~\eqref{vjunifH2qest} and the standard Morrey's inequality,
\begin{equation*} 
\begin{aligned}
\big[ \grad v_j \big]_{C^{1 - N/q}(B_{\delta(x) / 2}(x))} & \le C_q \Big( \delta(x)^{- q} \big\| \grad v_j \big\|_{L^q(B_{\delta(x)/2}(x))}^q + \big\| D^2 v_j \big\|_{L^q(B_{\delta(x) / 2}(x))}^q \Big)^{1/q} \\
& \le C_q \delta(x)^{-2} \bigg( \int_{\Omega} |\grad v_j(y)|^q \delta(y)^{q} dy + \int_{\Omega} |D^2 v_j(y)|^q \delta(y)^{2 q} dy \bigg)^{1/q} \\
& \le C_q \delta(x)^{-2} \big\| v_j \big\|_{L^{2, q}_N(\Omega)} \le \overline{C}_q \delta(x)^{-2}.
\end{aligned}
\end{equation*}
Hence, from this and~\eqref{vjunivbound}, it easily follows that
\begin{multline*}
\big|\Ds v_j(x)\big| 
\le C \bigg( \int_{B_{\delta(x)/2}} \frac{ [ \grad v_j ]_{C^{1 - N / q}(B_{\delta(x)/2}(x))}}{|z|^{N + 2 \s - 2 + N/q}} \, dz + \int_{\R^N \setminus B_{\delta(x)/2}} \frac{\| v_j \|_{L^\infty(\Omega)}}{|z|^{N + 2 \s}} \, dz \bigg) \leq \\
\le \overline{C}_q \delta(x)^{- 2 \s - N/q},
\end{multline*}
for every~$x \in \Omega$ and~$q$ large enough. Estimate~\eqref{vjunifdecay} is then a consequence of this inequality,~\eqref{pointDspsibound}, and Lemma~\ref{4.9gtinOmegalem}, recalling that~$v_j$ is a solution of~\eqref{probvj} and taking~$q = N p / \varepsilon$, with~$\varepsilon > 0$ sufficiently small.

Thanks to the uniform bounds~\eqref{vjunifH2qest} and~\eqref{vjunifdecay}, we are now able to get a limit for~$v_j$ as~$j \uparrow \infty$ and obtain a solution of~\eqref{probv}. 
Bear in mind that estimate~\eqref{vjunifH2qest} gives in particular that
\[
\| v_j \|_{W^{2, q}(\Omega_k)} \le \overline{C}_{q, k},
\]
for any~$q > N$. To see this, it is convenient to recall the equivalent representation for~$L^{2, q}_N(\Omega)$ given in Proposition~\ref{lotprops}$.iv$. Thus, 
by Morrey's inequality,
\begin{equation} \label{vjunifinCgamma}
\| v_j \|_{C^{\gamma}(\overline{\Omega_k})} \le \overline{C}_{\gamma, k},
\end{equation}
for any fixed~$\gamma \in (\max \{ 2 \s, 1 \}, 2)$ and every large integers~$j$ and~$k$.

\subsection{Passage to the limit} \label{subsub:lim}
By the compact embedding of H\"older spaces, bound~\eqref{vjunifinCgamma}, and a standard diagonal procedure,~${(v_j)}_{j\in\N}$ converges (up to a subsequence) to a function~$v$ in~$C^{\gamma}_\loc(\Omega)$. Letting~$j \uparrow \infty$ in~\eqref{vjunifdecay}, we obtain that the extension of~$v$ to~$0$ outside~$\Omega$ (that we still call~$v$) defines a continuous function on the whole~$\R^N$. By the dominated convergence theorem and the uniform~$L^\infty$ bound~\eqref{vjunivbound}, we also have that~$v_j \rightarrow v$ in~$L^1(\R^N)$.

Passing~\eqref{probvj} to the limit, one easily obtains that~$v$ is a weak solution of~$- \lapl v + P[v] = f$ in every open set compactly contained in~$\Omega$, that is
\[
\int_{\Omega} \big( \grad v \cdot \grad \varphi + P[v] \varphi \big) 
= \int_{\Omega} f \varphi \quad \mbox{for all } \varphi \in C^\infty_c(\Omega).
\]
Notice that this can be done since~$P_j[v_j]$ converges to~$P[v]$ in~$L^\infty_\loc(\Omega)$, as a consequence of the convergence of~$v_j$ to~$v$ in~$C^\gamma_\loc(\Omega)$ and~$L^1(\R^N)$. Now, since~$f$ and~$P[v]$ are both locally H\"older continuous functions in~$\Omega$ (as~$v \in C^\gamma_\loc(\Omega)$ with~$\gamma > 2 \s$), by elliptic regularity we conclude that~$v$ belongs to~$C^0(\R^N) \cap C^2(\Omega)$ and solves~\eqref{probv} pointwise. The proof of Theorem~\ref{thm:ex} is then complete.

\section{Non-existence. Proofs of Theorems~\ref{thm:non-ex} and~\ref{thm:non-ex-sourc}} \label{sec:non-ex}

In this section, we establish our two non-existence results, which are valid respectively when~$\s p \ge 1$ or when the right-hand side~$f$ blows up too rapidly at the boundary of~$\Omega$.

First, we deal with the case of vanishing right-hand side and critical or supercritical regime.

\begin{proof}[Proof of Theorem~\ref{thm:non-ex}.]
Letting~$\alpha, \varepsilon > 0$,~$m:=\max_{\partial\Omega} g > 0$,~$v_0 = \chi_{\overline{\Omega}}$, and $v_\alpha$ be as in Lemma~\ref{lem:valpha}, we consider the function
\[
\overline{w}_{\alpha, \varepsilon} := m ( v_0 - \varepsilon v_\alpha ).
\]
We claim that there exists an~$\varepsilon_0 \in (0, 1)$ small enough such that~$\overline{w}_{\alpha, \varepsilon}$ is a supersolution to problem~\eqref{prob} for any~$\alpha \in (0, \s)$ and~$\varepsilon \in (0, \varepsilon_0]$.
	
Clearly,~$\overline{w}_{\alpha, \varepsilon} \in L^\infty(\R^N) \cap C^2(\Omega) \cap C^0(\overline{\Omega})$. Moreover,
\begin{equation*}
\overline{w}_{\alpha, \varepsilon} \ge g \mbox{ on } \partial \Omega \quad \mbox{and} \quad \overline{w}_{\alpha, \varepsilon} \ge h \mbox{ in } \R^N \setminus \Omega,
\end{equation*}
thanks to the definition of~$m$ and the fact that~$h$ is non-positive. Therefore, in order to prove that~$\overline{w}_{\alpha, \varepsilon}$ is a supersolution to~\eqref{probf=0}, we only need to check that
\begin{equation} \label{walphaepssuper}
- \lapl \overline{w}_{\alpha, \varepsilon} + \big| \Ds \overline{w}_{\alpha, \varepsilon} \big|^{p - 1} \Ds \overline{w}_{\alpha, \varepsilon} \ge 0 \quad \mbox{in } \Omega,
\end{equation}
provided~$\varepsilon$ is sufficiently small, uniformly with respect to~$\alpha \in (0, \s)$.

To do this, we take into account formulas~\eqref{laplvalpha}-\eqref{fraclapv0} of Lemma~\ref{lem:valpha}, that give
\[- \lapl v_\alpha \le C_\star \delta^{\alpha-2} \quad \mbox{and} \quad \Ds v_\alpha \le C_\star \delta^{-2\s} \quad \mbox{in }\Omega,
\]
and
\[
- \lapl v_0 = 0 \quad \mbox{and} \quad \Ds v_0 \ge C_\star^{-1} \delta^{-2\s} 
\quad \mbox{in } \Omega,
\]
for some constant~$C_\star \ge 1$ depending only on~$N$,~$\s$, and~$\Omega$.
In particular,
\[
- \lapl \overline{w}_{\alpha, \varepsilon} = \varepsilon m \lapl v_\alpha \ge - C_\star m \varepsilon \, \delta^{\alpha-2} \quad \mbox{in } \Omega
\]
and
\[
\Ds \overline{w}_{\alpha, \varepsilon} = m \Big( \Ds v_0 - \varepsilon \Ds v_\alpha \Big) \ge m \big( C_\star^{-1} - C_\star \varepsilon \big) \delta^{- 2 \s} \ge \frac{m}{2 C_\star} \, \delta^{- 2 \s} \qquad \mbox{in } \Omega,
\]
provided~$\varepsilon \le (2 C_\star^2)^{-1}$. In light of these two relations, we obtain that, in~$\Omega$,
\begin{multline*}
- \lapl \overline{w}_{\alpha, \varepsilon} + \big| \Ds \overline{w}_{\alpha, \varepsilon} \big|^{p - 1} \Ds \overline{w}_{\alpha, \varepsilon}
\ge - C_\star m \varepsilon \, \delta^{\alpha-2} + \Big( \frac{m}{2 C_\star} \Big)^p \, \delta^{- 2 \s p} \geq \\
\ge \Big( \frac{m}{2 C_\star} \Big)^p \, \delta^{- 2 \s p} \bigg( 1 - \frac{2^p C_\star^{p + 1} \diam(\Omega)^{2 (\s p - 1) + \alpha}}{m^{p - 1}} \, \varepsilon \bigg) \ge 0
\end{multline*}
if~$\varepsilon$ is small enough, depending on~$N$,~$\s$,~$p$,~$\Omega$, and~$m$ only. Note that the second inequality holds since, by assumption,~$\sigma p \ge 1$.

We have therefore proved the validity of~\eqref{walphaepssuper}, and thus that~$\overline{w}_{\alpha, \varepsilon}$ is a supersolution to problem~\eqref{probf=0} for any~$\alpha \in (0, \s)$ and any~$\varepsilon \in (0, \varepsilon_0]$, with~$\varepsilon_0 \in (0, 1)$ independent of~$\alpha$. Suppose now that there exists a solution~$u \in L^\infty(\R^N) \cap C^2(\Omega) \cap C^0(\overline{\Omega})$ of~\eqref{probf=0}. By the weak comparison principle of Lemma~\ref{WCPprop}, we then deduce that
\[
u(x) \le \overline{w}_{\alpha, \varepsilon}(x) \quad \mbox{for any } x \in\Omega, \, \alpha \in (0, \s), \mbox{ and } \varepsilon \in (0, \varepsilon_0].
\]
By taking the limit as~$\alpha\downarrow 0$, this in turn implies that
\[
u(x) \le m (1 - \varepsilon_0) \quad \mbox{for any } x \in \Omega.
\]
In particular, since~$m > 0$, we infer that
\[
\sup_{\Omega} u \le m (1 - \varepsilon_0) 
= (1 - \varepsilon_0) \max_{\partial \Omega} g 
< \max_{\partial \Omega} g,
\]
which contradicts the fact that~$u$ attains continuously the boundary datum~$g$.
\end{proof}

Next, we establish the non-existence of solutions also in the case when the right-hand side is too singular at the boundary.

\begin{proof}[Proof of Theorem~\ref{thm:non-ex-sourc}]
For~$\alpha \in (0, \s)$, let~$v_\alpha$ be as in Lemma~\ref{lem:valpha}, and define
\[
\underline{w}_{\, \alpha, \varepsilon} := \varepsilon v_\alpha,
\]
for any~$\varepsilon \in (0, 1)$. With the help of~\eqref{laplvalpha},~\eqref{fraclapvalpha}, and the fact that~$\s p < 1$, we compute
\begin{multline*}
{- \lapl} \underline{w}_{\, \alpha, \varepsilon} +{|\Ds \underline{w}_{\, \alpha, \varepsilon}|}^{p-1}\Ds \underline{w}_{\, \alpha, \varepsilon} \le
C_2 \eps\delta^{\alpha-2} + C_2^p \eps^{p} \diam(\Omega)^{\alpha p} \delta^{-2\s p} \\
\le \eps C_2^p \Big( \diam(\Omega)^\alpha + \diam(\Omega)^{\alpha p + 2 (1 - \s p)} \Big) \delta^{-2} \le \kappa \delta^{-2}\leq f
\end{multline*}
provided~$\eps$ is chosen small enough, depending on~$N$,~$\s$,~$p$,~$\Omega$, and~$\kappa$ only. Note that~$\eps$ can be chosen uniformly with respect to~$\alpha\in(0,\s)$.
Accordingly,~$\underline{w}_{\, \alpha, \varepsilon}$ is a subsolution of problem~\eqref{probg,h=0} for any~$\alpha\in(0,\s)$. By the comparison principle of Proposition~\ref{WCPprop}, we then have that any solution~$u \in L^\infty(\R^N) \cap C^2(\Omega) \cap C^0(\overline{\Omega})$ of~\eqref{probg,h=0} must satisfy~$u\ge \underline{w}_{\, \alpha, \varepsilon} = \eps v_\alpha$ in~$\Omega$. Hence,
\begin{align*}
u(x) \ge \eps \lim_{\alpha\downarrow 0} \tau(x)^\alpha  = \eps \quad \text{for any } x \in \Omega,
\end{align*}
in contradiction with the fact that~$u \in C^0(\overline{\Omega})$ and the homogeneous boundary condition in~\eqref{probg,h=0}.
\end{proof}

\section{Boundary blow-up solutions. Proof of Theorem~\ref{theo:main}} \label{sec:large}

Here, we construct solutions of problem~\eqref{large-prob} which blow up at the boundary of~$\Omega$, thus establishing Theorem~\ref{theo:main}. We will do this by first solving approximating Dirichlet problems with larger and larger data on~$\partial \Omega$ and then passing to limit. This last step will be possible thanks to the barriers provided by the following preliminary result.

\begin{lemma}\label{lem:super}
Let~$\Omega \subseteq \R^N$ be a bounded domain with boundary of class~$C^2$. For
\begin{align*}
p\in\bigg(\frac{3-\s}{1+\s},\frac1\s\bigg),
\end{align*} 
let
\begin{equation} \label{gammadef}
\gamma = \gamma(\s,p) := - \frac{2 (1 - \s p)}{p-1} \in (- 1 + \s, 0)
\end{equation}
and~$V_{\gamma}$ be defined as in~\eqref{Vbetadef}. Then, there exist two constants~$A,B \ge 1$, depending only on~$N$,~$\Omega$,~$\s$, and~$p$, such that the~$L^1(\R^N) \cap C^2(\Omega)$ function
\begin{equation*}
\overline{u}(x) := A V_\gamma(x) + B \chi_\Omega(x), \quad x \in \R^N,
\end{equation*}
satisfies
\begin{align*}
-\lapl\overline{u}+\big|\Ds\overline{u}\big|^{p-1}\Ds\overline{u} \ \ge\ 0
\quad \text{in } \Omega.
\end{align*}
\end{lemma}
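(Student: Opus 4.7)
The plan is to split~$\Omega$ into the boundary layer~$\Gamma_{\delta_1}$, where~$\delta_1$ is given by Lemma~\ref{Vbetalem}, and the interior region~$\Omega\setminus\Gamma_{\delta_1}$, and to choose~$A$ and~$B$ in sequence: first~$A$ is picked large enough so that~$\overline u$ is a supersolution in~$\Gamma_{\delta_1}$, then~$B$ is enlarged to control the bounded error in the interior region. A preliminary reduction is to take~$\delta_1\le 1$, so that~$\delta^{\gamma-2\s}\ge \delta^{-2\s}$ in~$\Gamma_{\delta_1}$ (recall~$\gamma<0$). Also note that~$V_\gamma\in L^1(\R^N)\cap C^2(\Omega)$ since~$\gamma\in(-1,0)$ and~$V_\gamma$ is built smooth in the interior.

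Inside~$\Gamma_{\delta_1}$ the argument hinges on the algebraic identity
\[
(\gamma-2\s)p=\gamma-2,
\]
which is nothing but a rewriting of~\eqref{gammadef}. Combining the bounds in Lemma~\ref{Vbetalem} with the lower bound~$\Ds\chi_\Omega=\Ds v_0\ge C_3^{-1}\delta^{-2\s}$ from Lemma~\ref{lem:valpha}$(iii)$, and using~$A,B\ge 1$ together with~$\gamma<0$, I get in~$\Gamma_{\delta_1}$
\[
-\lapl\overline u\ \ge\ -A\,C_\sharp\,\delta^{\gamma-2},
\qquad
\Ds\overline u\ \ge\ A\,C_\sharp^{-1}\,\delta^{\gamma-2\s}\ >\ 0,
\]
so that, by the identity above,
\[
-\lapl\overline u+\big|\Ds\overline u\big|^{p-1}\Ds\overline u\ \ge\ \delta^{\gamma-2}A\bigg(\frac{A^{p-1}}{C_\sharp^{\,p}}-C_\sharp\bigg)\ \ge\ 0
\]
provided~$A\ge C_\sharp^{(p+1)/(p-1)}$. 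This fixes the choice of~$A$ in terms of structural constants only.

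In the complementary region~$\Omega\setminus\Gamma_{\delta_1}$ the function~$\overline u$ is bounded, smooth, and one has~$\lapl\chi_\Omega=0$, so~$|\lapl\overline u|=A|\lapl\eta|\le A\,M_1$ for some structural~$M_1$. Moreover,~$\Ds V_\gamma$ is bounded there (by standard pointwise evaluation of~$\Ds$ on~$C^2\cap L^1_\s$ functions, exploiting that the singular part of the integral is controlled by the local~$C^2$ seminorm and the tail by the~$L^1$ norm of~$V_\gamma$) by some~$M_2$, while Lemma~\ref{lem:valpha}$(iii)$ gives~$\Ds\chi_\Omega\ge c_0:=C_3^{-1}\diam(\Omega)^{-2\s}>0$. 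Choosing~$B$ sufficiently large depending on~$A$,~$M_1$,~$M_2$,~$c_0$, I obtain~$\Ds\overline u\ge Bc_0-AM_2\ge 1$ and then~$(\Ds\overline u)^p\ge AM_1$, which yields~$-\lapl\overline u+|\Ds\overline u|^{p-1}\Ds\overline u\ge 0$ in~$\Omega\setminus\Gamma_{\delta_1}$.

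The main obstacle is really just the exponent bookkeeping of step~two: the choice~\eqref{gammadef} of~$\gamma$ is engineered precisely so that the leading negative contribution~$-\lapl\overline u\sim-A\delta^{\gamma-2}$ is exactly balanced by the leading positive contribution~$|\Ds\overline u|^{p-1}\Ds\overline u\sim A^p\delta^{(\gamma-2\s)p}$. A secondary care is that Lemma~\ref{Vbetalem} applies only for~$\gamma\in(-1+\s,0)$, which in turn forces the lower restriction~$p>(3-\s)/(1+\s)$ in the hypothesis; this is exactly the interval of~$p$ allowed in the statement, so no further work is needed. Once~$A$ and~$B$ are fixed as above, no loss in assuming them both~$\ge 1$.
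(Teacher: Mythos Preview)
Your proof is correct and follows essentially the same approach as the paper's: split~$\Omega$ into the boundary layer~$\Gamma_{\delta_1}$ and the interior, use the identity~$(\gamma-2\s)p=\gamma-2$ together with Lemmas~\ref{Vbetalem} and~\ref{lem:valpha}$(iii)$ to fix~$A$ large in the layer, then enlarge~$B$ to handle the bounded remainder in the interior. One cosmetic slip: in~$\Omega\setminus\Gamma_{\delta_1}$ you write~$|\lapl\overline u|=A|\lapl\eta|$, but if~$\delta_1<\delta_0$ part of this region still has~$V_\gamma=\delta^\gamma$; the correct statement is~$|\lapl\overline u|=A|\lapl V_\gamma|\le A M_1$, which is what you need anyway.
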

\begin{proof}
In light of estimates~\eqref{laplVbeta}-\eqref{fraclapVbeta} of Lemma~\ref{Vbetalem} and~\eqref{fraclapv0} of Lemma~\ref{lem:valpha} (recall that~$v_0 = \chi_{\Omega}$ a.e.~in~$\R^N$), we have that
\[
- \lapl \overline{u} \ge - C_\sharp A \delta^{\gamma - 2}
\]
and
\[
\Ds \overline{u} \ge C_\sharp^{-1} A \delta^{\gamma - 2 \s} + C_3^{-1} B \delta^{-2 \s} \ge C_\sharp^{-1} A \delta^{\gamma - 2 \s}
\qquad\text{in }\Gamma = \{ x \in \Omega : \delta(x) < \delta_1 \}.
\]
Since, by~\eqref{gammadef}, we have~$\gamma - 2 = (\gamma  -2 \s) p$, the above two inequalities give that
\begin{align*}
-\lapl\overline{u}+\big|\Ds\overline{u}\big|^{p-1}\Ds\overline{u} & \ge C_\sharp^{-p} A^p \delta^{(\gamma - 2 \s) p} \Big( 1 - C_\sharp^{p + 1} A^{1 - p} \Big) \ge 0 \quad \mbox{in } \Gamma,
\end{align*}
provided~$A$ is large enough. The fact that, for~$B$ large, the same inequality also holds in~$\Omega \setminus \Gamma$ is a simple consequence of the left-hand bound in~\eqref{fraclapv0} and of the~$C^2(\Omega) \cap L^1(\R^N)$ regularity of~$V_\gamma$---which yields in particular that~$-\lapl V_\gamma$ and~$\Ds V_\gamma$ are both bounded in~$\Omega \setminus \Gamma$.
\end{proof}

\begin{proof}[Proof of Theorem~\ref{theo:main}]
For any~$j\in\N$, consider the solution~$u_j \in L^\infty(\R^N) \cap C^2(\Omega) \cap C^0(\overline{\Omega})$ of problem~\eqref{prob} associated to~$g\equiv j$ on~$\partial\Omega$,~$f\equiv 0$ in~$\Omega$,
and~$h \equiv 0$ in~$\R^N\setminus\overline{\Omega}$---its existence and uniqueness is guaranteed by Theorem~\ref{thm:ex}. By the comparison principle of Proposition~\ref{WCPprop},~${(u_j)}_{j\in\N}$ is a non-decreasing sequence bounded above by the function~$\overline{u}$ of Lemma~\ref{lem:super}. Let now~$j\uparrow\infty$ to get that~${(u_j)}_{j\in\N}$ converges monotonically to some~$u\leq\overline{u}$. We will show that this pointwise limit is the sought solution.

Our argument is similar to the one displayed in Subsections~\ref{subsub:uni}-\ref{subsub:lim}. Let~$q > 1$ and~$\theta \ge N p q$ to be chosen later. By Proposition~\ref{lotest} and Lemma~\ref{DsleH2slem} (applied with~$p q$ in place of~$p$,~$\theta + 2 q$ in place of~$\theta$,~$r = 1$, and~$\varepsilon = 2(1 - \s p) q > 0$), we have
\begin{align*}
\|u_j\|_{L^{2,q}_\theta(\Omega)} \le C\Big(\|\Ds u_j\|^p_{L^{p q}_{\theta + 2 q}(\Omega)} + \|u_j\|_{L^q_\theta(\Omega)} \Big) 
\le C\Big(\|u_j\|^p_{L^{2\s,p q}_{\theta}(\Omega)} + \|u_j\|^p_{L^1(\Omega)} + \|u_j\|_{L^q_\theta(\Omega)} \Big),
\end{align*}
for some constant~$C > 0$ depending only on~$N$,~$p$,~$q$,~$\theta$,~$\s$, and~$\Omega$. In view of Proposition~\ref{lotprops}$.v$ and~\cite{lunardi}*{Corollary~2.1.8}, we estimate
\[
\|u_j\|_{L^{2\s,p q}_{\theta}(\Omega)} \le C \|u_j\|_{L^{2, q}_{\theta}(\Omega)}^{\s} \|u_j\|_{L^{\frac{p q(1 - \s)}{1 - \s p}}_{\theta}(\Omega)}^{1 - \s}.
\]
Thus, using the weighted Young's inequality along with the facts that~$\sigma p < 1$,~$p \ge 1$, and~$0 \le u_j \le \overline{u}$,
\begin{multline*}
\|u_j\|_{L^{2,q}_\theta(\Omega)} 
\le C \bigg( \| u_j \|_{L^{\frac{p q(1 - \s)}{1 - \s p}}_{\theta}(\Omega)}^{\frac{(1 - \s) p}{1 - \s p}} + \| u_j \|^p_{L^1(\Omega)} + \| u_j \|_{L^q_\theta(\Omega)} \bigg) \le \\
\le C \bigg( \| \overline{u} \|_{L^{\frac{p q(1 - \s)}{1 - \s p}}_{\theta}(\Omega)}^{\frac{(1 - \s) p}{1 - \s p}} + \| \overline{u} \|^p_{L^1(\Omega)} + 1\bigg).
\end{multline*}
Notice that the first term involving~$\overline{u}$ on the right-hand side is finite, provided~$\theta$ is taken sufficiently large in dependence of~$N$,~$p$,~$q$, and~$\s$ only, whereas the second term is always finite, as~$\overline{u} \in L^1(\Omega)$.

Since the last estimate holds for every~$q > 1$, by compactness we deduce that~${(u_j)}_{j\in\N}$ actually converges to~$u$ in~$C^{1, \alpha}_\loc(\Omega)$, for every~$\alpha \in (0, 1)$. Using this, it is easy to see that~$u$ satisfies
\[
\int_{\Omega} \Big( \grad u \cdot \grad \varphi + \big( |\Ds u|^{p - 1} \Ds u\big) \varphi \Big)= 0 \quad \mbox{for all } \varphi \in C^\infty_c(\Omega).
\]
By standard elliptic regularity, we then get that~$u \in C^2(\Omega)$ and solves the equation in the pointwise sense.

Estimate~\eqref{ugrowth} is an immediate consequence of the pointwise inequalities~$0 \le u \le \overline{u}$. The fact that~$u > 0$ in~$\Omega$ follows from a simple strong maximum principle. Finally, for all~$x_0 \in \partial \Omega$ we have
\[
\liminf_{\Omega \ni x \rightarrow x_0} u(x) \ge \sup_{j \in \N}\ \lim_{\Omega \ni x \rightarrow x_0} u_j(x)  = \sup_{j \in \N} j = +\infty,
\]
and the proof is complete.
\end{proof}

\section{Comments and open questions}

We conclude the paper with a couple of remarks on possible extensions of our results and on points left open by our analysis.

\begin{enumerate}[label=$\roman*)$,leftmargin=3em]
\item Though stated for the specific operator~$u \longmapsto -\lapl u +\big|\Ds u \big|^{p-1}\Ds u$, the main results of this paper are actually valid for a larger class of operators having~$p$-growth in~$\Ds u$ and satisfying the comparison principle of Proposition~\ref{WCPprop}. While the~$p$-growth structure clearly cannot be fully abandoned---in light of the existence/non-existence dichotomy provided by Theorems~\ref{thm:ex} and~\ref{thm:non-ex}---, it would be nice to understand whether our results could be extended to operators which do not satisfy the hypotheses of Proposition~\ref{WCPprop}, such as~$u \longmapsto -\lapl u + \big|\Ds u \big|^p$ or~$u \longmapsto -\lapl u -\big|\Ds u \big|^{p-1}\Ds u$.
For some more details regarding this issue in the case~$p = 1$, see also~\cite{bdvv}*{Appendix A}.

\item Theorem~\ref{theo:main} gives the existence of a solution to~$-\lapl u +\big|\Ds u \big|^{p-1}\Ds u = 0$ in~$\Omega$ which vanishes a.e.~outside of~$\Omega$ and blows up at its boundary, from the inside. As a byproduct of the method of construction, we obtain the upper bound~\eqref{ugrowth} on its blow-up rate. Unfortunately, we are not able to determine neither a corresponding lower bound nor the uniqueness of the solution. We believe it would be interesting to investigate both these issues.
\end{enumerate}

\begin{bibdiv}
	\begin{biblist}
	
\bib{a1}{article}{
   author={Abatangelo, N.},
   title={Large $s$-harmonic functions and boundary blow-up solutions for the fractional Laplacian},
   journal={Discrete Cont. Dyn. Syst.},
   volume={35},
   date={2015},
   number={12},
   pages={5555--5607},
}

\bib{a2}{article}{
   author={Abatangelo, N.},
   title={Very large solutions for the fractional Laplacian: towards a
   fractional Keller-Osserman condition},
   journal={Adv. Nonlinear Anal.},
   volume={6},
   date={2017},
   number={4},
   pages={383--405},
}

\bib{ADFJS}{article}{
	author={Abatangelo, N.},
	author={Dipierro, S.},
	author={Fall, M.M.},
	author={Jarohs, S.},
	author={Salda\~na, A.},
	title={Positive powers of the Laplacian in the half-space under Dirichlet boundary conditions},
	journal={Discrete Contin. Dyn. Sist.},
	volume={39},
	date={2019},
	number={6},
	pages={1205--1235},
}

\bib{av}{article}{
   author={Abatangelo, N.},
   author={Valdinoci, E.},
   title={Getting acquainted with the fractional Laplacian},
   conference={
      title={Contemporary Research in Elliptic PDEs and related topics},
   },
   book={
      series={Springer INdAM Ser.},
      volume={33},
      publisher={Springer, Cham},
   },
   date={2019},
   pages={1--105},
}

\bib{adams}{book}{
	author={Adams, R.A.},
	title={Sobolev spaces},
	note={Pure and Applied Mathematics, Vol.~65},
	publisher={Academic Press, New York-London},
	date={1975},
	pages={xviii+268},
}

\bib{audrito-rosoton}{article}{
	author={Audrito, A.},
	author={Ros-Oton, X.},
	title={The Dirichlet problem for nonlocal elliptic operators with~$C^{0,\alpha}$ exterior data},
	date={2019},
	note={To appear in Proc. Amer. Math. Soc., available at arXiv:1910.10066},
}

\bib{barles-chasseigne-imbert}{article}{
   author={Barles, G.},
   author={Chasseigne, E.},
   author={Imbert, C.},
   title={On the Dirichlet problem for second-order elliptic
   integro-differential equations},
   journal={Indiana Univ. Math. J.},
   volume={57},
   date={2008},
   number={1},
   pages={213--246},
}

\bib{bdvv}{article}{
	author={Biagi, S.},
	author={Dipierro, S.},
	author={Valdinoci, E.},
	author={Vecchi, E.},
	title={Mixed local and nonlocal elliptic operators: regularity and maximum principles},
	date=={2020},
	note={Preprint, available at arXiv:2005.06907},
}

\bib{chen-felmer}{article}{
   author={Chen, H.},
   author={Felmer, P.},
   author={Quaas, A.},
   title={Large solutions to elliptic equations involving fractional
   Laplacian},
   journal={Ann. Inst. H. Poincar\'e Anal. Non Lin\'eaire},
   volume={32},
   date={2015},
   number={6},
   pages={1199--1228},
}

\bib{MR2928344}{article}{
   author={Chen, Z.-Q.},
   author={Kim, P.},
   author={Song, R.},
   author={Vondra\v{c}ek, Z.},
   title={Sharp Green function estimates for $\Delta+\Delta^{\alpha/2}$ in
   $C^{1,1}$ open sets and their applications},
   journal={Illinois J. Math.},
   volume={54},
   date={2010},
   number={3},
   pages={981--1024 (2012)},
}

\bib{MR2912450}{article}{
   author={Chen, Z.-Q.},
   author={Kim, P.},
   author={Song, R.},
   author={Vondra\v{c}ek, Z.},
   title={Boundary Harnack principle for $\Delta+\Delta^{\alpha/2}$},
   journal={Trans. Amer. Math. Soc.},
   volume={364},
   date={2012},
   number={8},
   pages={4169--4205},
}

\bib{hitchhiker}{article}{
	author={Di Nezza, E.},
	author={Palatucci, G.},
	author={Valdinoci, E.},
	title={Hitchhiker's guide to the fractional {S}obolev spaces},
	journal={Bull. Sci. Math.},
	volume={136},
	date={2012},
	number={5},
	pages={521--573},
}
\bib{dyda}{article}{
   author={Dyda, B.},
   title={Fractional calculus for power functions and eigenvalues of the
   fractional Laplacian},
   journal={Fract. Calc. Appl. Anal.},
   volume={15},
   date={2012},
   number={4},
   pages={536--555},
}

\bib{fq}{article}{
   author={Felmer, P.},
   author={Quaas, A.},
   title={Boundary blow up solutions for fractional elliptic equations},
   journal={Asymptot. Anal.},
   volume={78},
   date={2012},
   number={3},
   pages={123--144},
}

\bib{getoor}{article}{
	author={Getoor, R.K.},
	title={First passage times for symmetric stable processes in space},
	journal={Trans. Amer. Math. Soc.},
	volume={101},
	date={1961},
	pages={75--90},
}

\bib{gt}{book}{
   author={Gilbarg, D.},
   author={Trudinger, N.S.},
   title={Elliptic partial differential equations of second order},
   series={Classics in Mathematics},
   note={Reprint of the 1998 edition},
   publisher={Springer-Verlag, Berlin},
   date={2001},
   pages={xiv+517},
}

\bib{keller}{article}{
   author={Keller, J.B.},
   title={On solutions of $\Delta u=f(u)$},
   journal={Comm. Pure Appl. Math.},
   volume={10},
   date={1957},
   pages={503--510},
}

\bib{ll}{article}{
   author={Lasry, J.-M.},
   author={Lions, P.-L.},
   title={Nonlinear elliptic equations with singular boundary conditions and
   stochastic control with state constraints. I. The model problem},
   journal={Math. Ann.},
   volume={283},
   date={1989},
   number={4},
   pages={583--630},
}

\bib{lot0}{article}{
   author={Lototsky, S.V.},
   title={Dirichlet problem for stochastic parabolic equations in smooth domains},
   journal={Stoch. Stoch. Rep.},
   volume={68},
   date={1999},
   number={1-2},
   pages={145--175},
}

\bib{lot}{article}{
   author={Lototsky, S.V.},
   title={Sobolev spaces with weights in domains and boundary value problems
   for degenerate elliptic equations},
   journal={Methods Appl. Anal.},
   volume={7},
   date={2000},
   number={1},
   pages={195--204},
}

\bib{lunardi_analsemi}{book}{
	author={Lunardi, A.},
	title={Analytic semigroups and optimal regularity in parabolic problems},
    series={Modern Birkh\"{a}user Classics},
	publisher={Birkh\"{a}user/Springer Basel AG, Basel},
	date={1995},
	pages={xviii+424},
}

\bib{lunardi}{book}{
   author={Lunardi, A.},
   title={Interpolation theory},
   series={Appunti. Scuola Normale Superiore di Pisa (Nuova Serie). [Lecture
   Notes. Scuola Normale Superiore di Pisa (New Series)]},
   edition={2},
   publisher={Edizioni della Normale, Pisa},
   date={2009},
   pages={xiv+191},
}
\bib{mazya}{book}{
	author={Maz'ya, V.},
	title={Sobolev spaces with applications to elliptic partial differential equations},
	series={Grundlehren der Mathematischen Wissenschaften (Fundamental Principles of Mathematical Sciences)},
	volume={342},
	edition={augmented edition},
	publisher={Springer, Heidelberg},
	date={2011},
	pages={xxviii+866},
}

\bib{osserman}{article}{
   author={Osserman, R.},
   title={On the inequality $\Delta u\geq f(u)$},
   journal={Pacific J. Math.},
   volume={7},
   date={1957},
   pages={1641--1647},
}

\bib{serrin}{article}{
   author={Serrin, J.},
   title={The problem of Dirichlet for quasilinear elliptic differential
   equations with many independent variables},
   journal={Philos. Trans. Roy. Soc. London Ser. A},
   volume={264},
   date={1969},
   pages={413--496},
}

\bib{tomi}{article}{
   author={Tomi, F.},
   title={\"Uber semilineare elliptische Differentialgleichungen zweiter
   Ordnung},
   language={German},
   journal={Math. Z.},
   volume={111},
   date={1969},
   pages={350--366},
}

	\end{biblist}
\end{bibdiv}

\end{document}